\newtheorem{theorem}{Th\'eor\`eme}[section]
\newtheorem{lemma}[theorem]{Lemme}
\newtheorem{proposition}[theorem]{Proposition}
\newtheorem{corollary}[theorem]{Corollaire}
\theoremstyle{definition}
\newtheorem{remark}[theorem]{Remarque}
\def\limproj{\mathop{\oalign{{\rm 
lim}\cr\hidewidth$\longleftarrow$\hidewidth\cr}}}
\def\lien{\mathrel{\mkern-4mu}}
\def\too{\relbar\lien\rightarrow}
\def\tooo{\relbar\lien\relbar\lien\too}
\def\ds{\displaystyle}
\def\notdiv{\nmid}
\def\ov{\overline}
\def\div{\,\vert\,}
\def\Q{\mathbb{Q}}
\def\Z{\mathbb{Z}}
\def\No{{\rm N}}
\def\Frac#1#2{\hbox{\footnotesize $\displaystyle \frac{#1}{#2}$}}
\def\plus{\displaystyle\mathop{\raise 2.0pt \hbox{$\bigoplus 
$}}\limits}
\def\prd{ \displaystyle\mathop{\raise 2.0pt \hbox{$\prod$}}\limits}
\def\sm{  \displaystyle\mathop{\raise 2.0pt \hbox{$\sum$}}\limits}
\author[Georges  Gras]{Georges  Gras }
\address{Villa la Gardette \\ chemin Ch\^ateau Gagni\`ere \\  
F--38520 Le Bourg d'Oisans.}
\email{g.mn.gras@wanadoo.fr  {\it 
url\,:\,}\url{http://www.researchgate.net/profile/Georges_Gras}  }
\keywords{class field theory; $p$-ramification; Bertrandias--Payan 
module; capitulation of ideal classes; transfert map}
\subjclass{Primary 11R04 ; Secondary 11R11, 11R16 }
\begin{document}

\title[Sur le module de Bertrandias--Payan]{Sur le module de Bertrandias--Payan \\
dans une $p$-extension -- Noyau de capitulation}

\begin{abstract} For a number field $K$ and a prime number $p$ we denote by $BP_K$ the compositum of the
cyclic $p$-extensions of $K$ embeddable in a cyclic $p$-extension of arbitrary large degree. 
Then $BP_K$ is $p$-ramified and is a finite extension of the compositum $\widetilde K$
of the $\Z_p$-extensions of $K$.
We study the transfer map $j_{L/K}$ (as a capitulation map of ideal classes) for the Bertrandias--Payan  module
${\mathcal B\mathcal P}_K := {\rm Gal}(BP_K/\widetilde K)$ in a $p$-extension $L/K$
(assuming the Leopoldt conjecture). 
In the cyclic case of degree $p$, $j_{L/K}$ is injective except if $L/K$ is kummerian,  $p$-ramified,
non globally cyclotomic but locally cyclotomic at $p$ (Theorem \ref{thmf}).
We then intend to characterize the condition $\vert {\mathcal B\mathcal P}_K\vert$ divides $\vert 
{\mathcal B\mathcal P}_L^G\vert$ ($G = {\rm Gal}(L/K)$).
So we study ${\mathcal B\mathcal P}_L^G$ when $j_{L/K}$ is not injective 
and show that it depends on the classical arithmetical properties of the Galois group (over $\widetilde K$) of
the maximal Abelian $p$-ramified pro-$p$-extension of $K$.
We give complete proofs in an elementary way using ideal approach of global class field theory.
\end{abstract}

\maketitle

\centerline{18 Octobre 2015}

\medskip
Ce texte r\'esulte d'un travail en collaboration avec Jean--Fran\c cois Jaulent (Bordeaux) et Thong Nguyen Quang Do 
(Besan\c con).  L'originalit\'e de la d\'emarche \'etant que trois techniques sont propos\'ees (une par auteur) 
pour aborder le m\^eme sujet. 
Ceci nous a sembl\'e n\'ecessaire dans la mesure o\`u toutes sont issues de la th\'eorie du corps de classes, 
mais ont \'et\'e formalis\'ees diff\'eremment (tant au plan des notions que des notations) selon les
grands axes naturels du corps de classes (classes d'id\'eaux et d'id\`eles, 
corps de classes global $p$-adique et logarithmique, cohomologie Galoisienne et th\'eorie d'Iwasawa). 
Ainsi le lecteur est-il invit\'e \`a comparer ces m\'ethodologies qui, sauf erreur, ont le m\^eme potentiel de r\'esultats, 
mais avec des avantages et inconv\'enients diff\'erents selon le but recherch\'e
(allant du plus concret et num\'erique au plus abstrait), cette multiplicit\'e \'etant souvent \`a l'origine de 
difficult\'es \'editoriales (voir un historique au \S\,\ref{histoire}).

\smallskip
 Je remercie mes deux partenaires pour les nombreux \'echanges constructifs que nous avons eus.

\section{G\'en\'eralit\'es sur le module de Bertrandias--Payan}

\subsection{Le sch\'ema de la $p$-ramification Ab\'elienne}\label{sch}
Soit $p$ un nombre premier. Pour un corps $k$ quelconque on d\'esigne par $\mu(k)$ le groupe des 
racines $p$-i\`emes de l'unit\'e de $k$. Par la commodit\'e que conf\`ere un livre, on se r\'ef\`ere le plus 
souvent \`a \cite{Gr1} (non exclusif).

\smallskip
On consid\`ere le sch\'ema ci-apr\`es (\cite{Gr1}, III.2.6.1, 
Fig. 2.2), au sens ordinaire de la notion de classes d'id\'eaux,
dans lequel $\widetilde K$ est le compos\'e des 
$\Z_p$-extensions du corps de nombres
$K$, $H_K$ le $p$-corps de classes de Hilbert de $K$, $H_K^{\rm pr}$ la 
pro-$p$-extension Ab\'elienne $p$-ramifi\'ee (i.e., non ramifi\'ee en 
dehors de $p$) maximale de $K$. 

\smallskip
On d\'esigne par $U_K:=\bigoplus_{v \div p} U_v$ le $\Z_p$-module des 
unit\'es locales $p$-principales\,\footnote{D'une mani\`ere 
g\'en\'erale, dans un groupe de nombres les \'el\'ements 
$p$-principaux sont ceux dont l'image r\'esiduelle est \'egale \`a 1 
en toute place $v \div p$. }
pour $K$, o\`u chaque $U_v$ est le groupe des unit\'es 
$v$-principales du compl\'et\'e $K_v$ de $K$ en $v \div p$, par
$W_K = {\rm tor}_{\Z_p}(U_K)  = \bigoplus_{v \div p} \mu(K_v)$, et 
par $\overline E_K$ l'adh\'erence dans $U_K$ du groupe des unit\'es 
globales ($p$-principales) $E_K$ de $K$. 

\smallskip
On pose ${\mathcal W}_K = W_K / \mu(K) \subseteq {\rm 
tor}_{\Z_p}(U_K/\overline E_K) \subseteq {\mathcal T}_K$ (inclusions valables sous la 
conjecture de Leopoldt pour $K$ et $p$~: \cite{Gr1}, Lemme III.4.2.4)~:
\unitlength=0.91cm
$$\vbox{\hbox{\hspace{-2.8cm}  \begin{picture}(11.5,6.2)
% horizontales
\put(6.4,4.50){\line(1,0){1.3}}
\put(8.7,4.50){\line(1,0){2.0}}
\put(3.85,4.50){\line(1,0){1.4}}
\put(9.2,4.65){\footnotesize$\mathcal W_K$}
\put(4.15,2.50){\line(1,0){1.3}}

\bezier{350}(3.8,4.8)(7.6,6.8)(11.0,4.8)
\put(7.4,6.0){\footnotesize${\mathcal T}_K$}

\bezier{350}(3.8,4.6)(6.0,5.6)(7.8,4.6)
\put(5.6,5.2){\footnotesize${\mathcal B\mathcal P}_K$}

% vertical
\put(3.50,2.9){\line(0,1){1.25}}
\put(3.50,0.9){\line(0,1){1.25}}
\put(5.7,2.9){\line(0,1){1.25}}

\bezier{300}(3.9,0.5)(4.7,0.5)(5.6,2.3)
\put(4.2,1.2){\footnotesize$Cl_K$}

\bezier{300}(6.2,2.5)(8.5,2.6)(10.8,4.3)
\put(7.55,2.45){\footnotesize${\mathcal B}_K\!\! \simeq\!\! 
U_K\!/\!\overline E_K$}

\bezier{400}(4.0,0.35)(8.0,0.0)(11.0,4.3)
\put(8.6,1.5){\footnotesize$\mathcal A_K$}

\bezier{400}(4.0,0.45)(6.0,0.5)(8.1,4.3) 
\put(6.4,1.6){\footnotesize$\mathcal C_K$}

\put(10.85,4.4){$H_K^{\rm pr}$}
\put(5.4,4.4){$\widetilde K H_K$}
\put(7.8,4.4){$BP_K$}
\put(3.3,4.4){$\widetilde K$}
\put(5.5,2.4){$H_K$}
\put(2.7,2.4){$\widetilde K \cap H_K$}
\put(3.4,0.40){$K$}
\end{picture}   }} $$
\unitlength=1.0cm

\subsection{Historique de la $p$-ramification}\label{histoire}
Les $p$-groupes de torsion $\mathcal T$ jouent un r\^ole capital dans 
toutes les questions de type {\it Th\'eorie du Corps de Classes} et plus pr\'ecisement {\it 
Th\'eorie de la $p$-Ramification Ab\'elienne}, et ont \'et\'e longuement \'etudi\'es, sous la conjecture de Leopoldt, selon la 
chronologie approximative suivante~:  

\smallskip\noindent
\cite{Kub} (1957), \cite{Mi} (1978), \cite{Gr2} (1982), \cite{Gr3} (1983), \cite{Ja1} (1983), \cite{Ja2} (1984),
\cite{Gr4} (1985), \cite{Gr5} (1986), \cite{Ng1} (1986), \cite{MoNg1} (1987), \cite{He} (1988), \cite{Ja3} (1990), \cite{Th} (1993), \cite{Ja4} (1998), \cite{Seo1} (2011), \cite{Seo2} (2013), \cite{Seo3} (2013), \cite{Gr6} (2014), \cite{MoNg2} (2015), \cite{Gr7} (2015), 
\cite{Seo4} (2015),  \cite{Seo5} (2015), \cite{Seo6} (2015).

\smallskip
Les sujets essentiels d\'evelopp\'es dans ce cadre sont les suivants~:

\smallskip
(i) d\'etermination du radical initial dans les $\Z_p$-extensions lorsque $\mu(K) \ne 1$, \`a savoir, trouver les nombres 
$\alpha \in K^\times$ tels que $K(\sqrt[p] \alpha) \subset \widetilde K$ (\cite{CK}, \cite{Gr4}, \cite{He}, \cite{MoNg2}, \cite{Th}, \cite{JaM1}, \cite{Seo1}, \cite{Seo2}), 

\smallskip
(ii) \'etude des notions de $p$-rationalit\'e et $p$-r\'egularit\'e, la $p$-rationalit\'e de $K$ \'etant la nullit\'e de ${\mathcal T}_K$ 
sous la conjecture de Leopoldt, et la $p$-r\'egularit\'e celle du $p$-Sylow ${\rm R}_2(K)$ du noyau  (dans ${\rm K}_2(K)$) des symboles r\'eguliers (\cite{Gr1}, \cite{Gr5}, \cite{GrJ},\cite{Ja4}, \cite{JaNg}, \cite{Mo}, \cite{MoNg1}), ainsi que des cas particuliers de ces notions, notamment pour $p=2$ (J-F. Jaulent, F. Soriano--Gafiuk, O. Sauzet). 

\smallskip
(iii) g\'en\'eralisations de la notion de $p$-tours de corps de classes et structure du groupe de Galois de la
pro-$p$-extension maximale d'un corps de nombres (\cite{JaS}, \cite{JaM1}, \cite{JaM2}, \cite{Mai1} \cite{Mai2}, \cite{Seo6}),

\smallskip
(iv) obtention de r\'esultats num\'eriques sur le radical initial, le groupe ${\mathcal T}_K$, et sur la structure de
${\rm Gal}(K^{\rm ab}/ K)$ o\`u $K^{\rm ab}$ est la pro-$p$-extension Ab\'elienne maximale de $K$ 
(\cite{Cha}, \cite{He}, \cite{PV}, \cite{Th}, \cite{AS}, \cite{Gr6}), 

\smallskip
(v) analyse d'aspects conjecturaux lorsque $p \to \infty$ (e.g. \cite{Gr7} conjecturant la $p$-ratio\-nalit\'e de tout corps de 
nombres pour $p$ assez grand). R\'ecemment, la notion de $p$-rationalit\'e a acquis une importance nouvelle dans le cadre des repr\'esentations Galoisiennes en direction d'autres conjectures (e.g. \cite{Gre}).

\smallskip
Les techniques utilis\'ees \'etant de nature ``classes d'id\'eaux'' ou bien de nature ``Diviseurs'', aux sens g\'en\'eralis\'es des 
infinit\'esimaux d\'evelopp\'es au d\'ebut par G. Gras et  J-F. Jaulent et leurs \'el\`eves, ou encore de nature ``Cohomologie 
Galoisienne  et Th\'eorie d'Iwasawa'' d\'evelopp\'ee par T. Nguyen Quang Do et ses \'el\`eves, puis repris plus r\'ecemment au moyen 
d'autres points de vue de type cohomologique, comme par exemple par Seo, souvent ind\'ependamment des travaux pr\'ec\'edents, lesquels ont \'et\'e pour la plupart regroup\'es en d\'etail dans \cite{Gr1} 
(2003) et notamment dans l'Edition Springer 2005 corrig\'ee et augment\'ee.

\smallskip
Dans \cite{Ja6} et \cite{Ng2}, ainsi que dans l'article pr\'esent, on trouvera des compl\'ements et mises au point \`a ce sujet au moyen de 
l'\'etude du module de Bertrandias--Payan (cf. \S\,\ref{mbp}) qui se pr\^ete bien \`a l'utilisation de ces techniques, les trois auteurs ayant le projet de publier un document explicatif en anglais sur ces trois approches similaires, mais p\^atissant jusqu'ici de 
l'absence de ``table de concordance'' d\'etaill\'ee et de plusieurs textes publi\'es en fran\c cais, ce qui est sans doute \`a l'origine 
des difficult\'es d'analyse et de comparaison des r\'esultats de la litt\'erature et bien souvent de l'ignorance des r\'esultats ant\'erieurs.

\subsection{Le module de Bertrandias--Payan}\label{mbp}
 On sait que sous la conjecture de Leopoldt pour $K$ et $p\ne 2$, ${\mathcal W_K} \subseteq  {\rm tor}_{\Z_p}(U_K/\overline E_K)$ 
fixe l'exten\-sion $BP_K$ qui est la pro-$p$-extension maximale de $K$ compos\'ee des 
$p$-extensions cycliques plongeables dans une $p$-extension cyclique de $K$ de degr\'e arbitrairement 
grand (\cite{Gr1}, Corollary III.4.15.8)~; 
le groupe ${\mathcal B\mathcal P}_K := {\rm Gal}(BP_K/\widetilde K) \simeq {\mathcal T}_K/{\mathcal W}_K$, 
qui est le groupe de torsion du groupe ${\mathcal C}_K= {\rm Gal}(BP_K/K)$, est appel\'e depuis \cite{Ng1} le module 
de Bertrandias--Payan \`a la suite de l'article \cite{BP}.

\smallskip
Si l'on se r\'ef\`ere \`a \cite{Ja2}, \cite{Ng1}, ou \`a \cite{Gr1}, III.4.15.3, 4.15.4, $BP_K$ est aussi le compos\'e 
des $p$-extensions cycliques de $K$ localement plongeables (en toute place $v$ finie) dans une $\Z_p$-extension de $K_v$.

\smallskip
De nombreux travaux, comme ceux mentionn\'es dans l'historique
pr\'ec\'edent, \'evoquent le r\^ole de ${\mathcal B\mathcal P}$ dans 
la th\'eorie du corps de classes, aussi nous souhaitons montrer ici 
que l'on dispose depuis longtemps des techniques permettant 
d'\'etudier les propri\'et\'es de  ce module. 

\smallskip
Soit $S$ un ensemble fini de places de $K$ contenant 
l'ensemble $S_p:= \{v,\ v\div p\}$ des $p$-places~; $S$ sera choisi en fonction de
l'extension $L/K$ consid\'er\'ee et en particulier contiendra les 
places ramifi\'ees dans $L/K$. Les r\'esultats d\'ependront essentiellement de $S_p$. 
Notre ensemble $S$ jouera un r\^ole diff\'erent de celui utilis\'e dans  \cite{Ja6} et \cite{Ng2}.

\smallskip
On pose ${\mathcal K}_1^\times=K_{1}^\times \otimes \Z_p$, o\`u 
$K_{1}^\times$ est le groupe des \'el\'ements $p$-principaux de 
$K^\times$ \'etrangers \`a $S$ puis ${\mathcal I}_K=I_{K} \otimes \Z_p$, o\`u $I_{K}$ est le 
groupe des id\'eaux de $K$  \'etrangers \`a $S$, et ${\mathcal P}_K = \{(x), \ x \in {\mathcal K}_1^\times\} = 
P_{K} \otimes \Z_p$, o\`u $P_{K}$ est le sous-groupe des 
id\'eaux principaux \'etrangers \`a $S$. On pose ${\mathcal E}_K = E_K\otimes \Z_p$ (groupe des unit\'es).

\smallskip
Soit $i_K$ le plongement (surjectif) de ${\mathcal K}_1^\times$ dans 
$U_K=\bigoplus_{v \div p} U_v$ et soit
${\mathcal K}^\times_\infty$ le noyau de $i_K$ (groupe des 
infinit\'esimaux  \'etrangers \`a $S$ de Jaulent, \cite{Ja1}, \cite{Ja2}, \cite{Ja4}).
On d\'esigne par ${\mathcal P}_{K,\infty}$ le groupe des id\'eaux 
principaux $(x_\infty)$, $x_\infty \in {\mathcal K}^\times_\infty$.

\smallskip
On a alors (\cite{Gr1}, Th\'eor\`eme III.2.4)~:
$${\mathcal A}_K :=  {\rm Gal}(H_K^{\rm pr}/K) \simeq
 {\mathcal I}_K/ {\mathcal P}_{K,\infty},  {\mathcal B}_K := {\rm Gal}(H_K^{\rm pr}/H_K) 
\simeq {\mathcal P}_K/{\mathcal P}_{K,\infty} ,
{\mathcal T}_K := {\rm tor}_{\Z_p}({\mathcal A}_K) .$$

\begin{remark}\label{approx}
De fait, la th\'eorie du corps de classes d\'efinit 
${\mathcal A}_K = \ds \limproj_{n}\, {\rm Gal}(K^{(p^n)}/K) \simeq 
\limproj_{n} \, ( {\mathcal I}_K/{\mathcal P}_{K, (p^n)} )$, o\`u $K^{(p^n)}$ 
est le $p$-corps de rayon modulo $(p^n)$, o\`u ${\mathcal P}_{K, (p^n)} := \{(x),\ x\equiv 1 \pmod {p^n}\}$~; 
or $\ds\bigcap_n {\mathcal P}_{K, (p^n)} = {\mathcal P}_{K,\infty}$ (\cite{Gr1}, Proposition III.2.4.1). 

Le fait de pouvoir travailler avec des groupes d'id\'eaux \'etrangers \`a $S$ provient du th\'eor\`eme 
d'approximation id\'elique ou simplement du fait que toute classe d'id\'eaux contient un 
repr\'esentant \'etranger \`a $S$ (\cite{Gr1}, Th\'eor\`eme I.4.3.3 et Remarque I.5.1.2).
On a alors, avec des notations \'evidentes au niveau fini~: 
$${\rm Gal}(K^{(p^n)}/K) \simeq 
 {I}_{K,S_p}/{P}_{K,S_p, (p^n)} \simeq  {I}_{K,S}/{P}_{K,S, (p^n)} , $$
o\`u les isomorphismes avec le groupe de Galois
sont obtenus via le symbole d'Artin qui est d\'efini pour les id\'eaux 
\'etrangers \`a $S_p$, donc a fortiori \'etrangers \`a $S$.
\end{remark}

On d\'efinit les m\^emes objets relatifs \`a toute $p$-extension $L$ de $K$ 
en utilisant l'ensemble, not\'e encore $S$, des places de $L$ au-dessus de $S$.
Tous les objets ainsi obtenus par tensorisation avec $\Z_p$ sont des 
$\Z_p$-modules et se comportent comme les objets d'origine en raison de la platitude de~$\Z_p$. 

\smallskip
Nous commencerons par le cas $L/K$ cyclique qui est en un sens universel et qui
permet une plus grande effectivit\'e quant au plan num\'erique. 

\smallskip
On utilisera \`a plusieurs reprises les lemmes suivants~:

\begin{lemma}\label{leopoldt} Soit $k$ un corps de nombres v\'erifiant la conjecture de Leopoldt pour $p$.
Soit $\varepsilon \in {\mathcal E}_k := E_k \otimes \Z_p$ telle que, dans $U_k$, on ait $i_k(\varepsilon)^{p^e}=1$, 
$e \geq 0$(i.e., $i_k(\varepsilon) \in W_k$). Alors $\varepsilon \in \mu(k)\otimes \Z_p=\mu(k)$ et $\varepsilon^{p^e}=1$.
Il en r\'esulte ${\rm tor}_{\Z_p}(U_k/i_k(E_k)) = W_k/i_k(\mu(k))$.
\end{lemma}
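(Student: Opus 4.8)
The plan is to reduce everything to the torsion-free part of $\mathcal E_k$ and to feed in the Leopoldt conjecture precisely there. By Dirichlet's unit theorem, after tensoring one has $\mathcal E_k=E_k\otimes\Z_p\simeq \mu(k)\oplus F$, where $F$ is free of $\Z_p$-rank $r=r_1+r_2-1$ and $\mu(k)$ is the group of $p$-power roots of unity (the prime-to-$p$ torsion of $E_k$ dying under $\otimes\,\Z_p$), so that ${\rm tor}_{\Z_p}(\mathcal E_k)=\mu(k)$. First I would record the two elementary facts about $i_k$ that I need: that $i_k$ maps $\mu(k)$ into $W_k=\bigoplus_{v\mid p}\mu(K_v)$ (a $p$-power root of unity is $v$-principal and remains a root of unity in each $K_v$), and that $i_k$ is \emph{injective} on $\mu(k)$ (the local embedding $k\hookrightarrow K_v$ does not kill a nontrivial root of unity). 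The decisive input is then the following reformulation of Leopoldt for $(k,p)$: the induced map $\mathcal E_k\to U_k\otimes_{\Z_p}\Q_p$ has kernel exactly $\mu(k)$; equivalently $i_k\vert_F$ is injective, with $i_k(F)$ of full $\Z_p$-rank $r$.

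Granting this, the first assertion is short. Write $\varepsilon=\zeta\,\eta$ with $\zeta\in\mu(k)$ and $\eta\in F$. Since $i_k(\zeta)\in W_k$ and, by hypothesis, $i_k(\varepsilon)\in W_k$, we get $i_k(\eta)=i_k(\zeta)^{-1}\,i_k(\varepsilon)\in W_k$, i.e. $i_k(\eta)$ is of finite order, say $i_k(\eta)^{p^N}=1$. Hence $i_k(\eta^{p^N})=1$, so $\eta^{p^N}$ lies in $\ker(i_k\vert_F)=\{1\}$; as $F$ is torsion-free this forces $\eta=1$, whence $\varepsilon=\zeta\in\mu(k)$. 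For the sharper conclusion, from $\varepsilon\in\mu(k)$ and $i_k(\varepsilon)^{p^e}=i_k(\varepsilon^{p^e})=1$ the injectivity of $i_k$ on $\mu(k)$ yields $\varepsilon^{p^e}=1$.

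Finally I would derive the stated identity from the first part. What has just been proved says exactly that any torsion element of $U_k$ lying in $i_k(\mathcal E_k)$ is the image of a root of unity, that is $W_k\cap i_k(\mathcal E_k)=i_k(\mu(k))$, the inclusion $\supseteq$ being clear. Passing to $U_k/i_k(\mathcal E_k)$, the image of $W_k$ is therefore canonically $W_k/\big(W_k\cap i_k(\mathcal E_k)\big)=W_k/i_k(\mu(k))$, a finite group sitting inside ${\rm tor}_{\Z_p}\big(U_k/i_k(\mathcal E_k)\big)$; this is precisely the identification $\mathcal W_k=W_k/i_k(\mu(k))$ recorded in \S\ref{sch}.

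The main obstacle is not computational but conceptual: it is the faithful use of Leopoldt, namely the fact that $i_k$ sends no nontrivial element of the free part $F$ into the torsion subgroup $W_k$ (equivalently, that the only relations among the local images of a basis of $F$ are the trivial ones). Everything else — the splitting of $\mathcal E_k$, the behaviour of $i_k$ on $\mu(k)$, and the final intersection computation $W_k\cap i_k(\mathcal E_k)=i_k(\mu(k))$ — is formal once this injectivity is in place.
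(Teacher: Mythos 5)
Your argument is correct and is essentially the paper's own proof written out in full: the paper disposes of the lemma in one line by invoking the classical equivalence between the Leopoldt conjecture for $(k,p)$ and the injectivity of $i_k$ on ${\mathcal E}_k$, and your decomposition ${\mathcal E}_k=\mu(k)\oplus F$ together with the injectivity of $i_k$ on $F$ (the substance of Leopoldt) and on $\mu(k)$ (automatic) is exactly that fact made explicit; the deduction $\varepsilon=\zeta\eta$, $i_k(\eta)\in W_k$, $\eta^{p^N}\in\ker(i_k\vert_F)=1$, $\eta=1$ is the intended one.

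One remark on the final sentence of the lemma. What you actually establish is $W_k\cap i_k({\mathcal E}_k)=i_k(\mu(k))$, hence a canonical injection $W_k/i_k(\mu(k))\hookrightarrow {\rm tor}_{\Z_p}\bigl(U_k/\overline E_k\bigr)$, and you are right to stop there: read as a literal equality of subgroups of $U_k/\overline E_k$, the displayed identity would fail in general, since ${\rm tor}_{\Z_p}(U_k/\overline E_k)$ also contains the contribution of the normalized $p$-adic regulator and can be strictly larger than ${\mathcal W}_k$ (this is precisely the inclusion ${\mathcal W}_K\subseteq {\rm tor}_{\Z_p}(U_K/\overline E_K)$, possibly strict, recorded in \S\,\ref{sch}). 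The ``\'egalit\'e'' of the lemma must therefore be understood as the identification of ${\mathcal W}_k$ with $W_k/i_k(\mu(k))$ inside that torsion group --- which is all that is used later, e.g.\ in the proof of Lemme \ref{W} --- and your phrase ``sitting inside'' captures the correct statement.
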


\begin{proof} Caract\'erisation tr\`es classique utilisant 
l'injectivit\'e de $i_k$ sur ${\mathcal E}_k$ (\cite{Gr1}, Th\'eor\`eme III.3.6.2\,(vi) et \cite{Ja3}, Th\'eor\`eme 12).
\end{proof}

\begin{lemma}\label{zeta} 
Soit $L/K$ cyclique de degr\'e $p>2$ et soit $s$ un g\'en\'erateur de $G :={\rm Gal}(L/K)$.

\smallskip
(i) Si $\mu(K)=1$, alors $\mu(L)=1$ et ${\rm H}^1(G, \mu(L))=0$.

\smallskip
(ii) Si $\mu(K) =\mu(L)^p \ne 1$ (i.e., $L/K$ est cyclotomique globale de degr\'e~$p$), 
alors $\No_{L/K}(\mu(L)) = \mu(K)$ et $\mu(L)^{1-s}=\langle \zeta_1 \rangle$, o\`u $\zeta_1 \in \mu(K)$ est d'ordre $p$, 
d'o\`u ${\rm H}^1(G, \mu(L)) = 0$.

\smallskip
Si $\mu(K) = \mu(L) \ne 1$, $\No_{L/K}(\mu(L)) = \mu(K)^p$, $\mu(L)^{1-s}=1$, 
et ${\rm H}^1(G, \mu(L)) \simeq \Z/p\Z$. 

\smallskip
(iii) Si $\xi_L \in W_L$ v\'erifie $\xi_L^{1-s}= i_L(\zeta_L)$, o\`u 
$\zeta_L \in \mu(L)$, alors $\zeta_L$ est d'ordre 1 ou $p$. On a $W_L^G = W_K$ et $W_L^p \subseteq W_K$.
\end{lemma}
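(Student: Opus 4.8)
The plan is to reduce everything to the $G$-module structure of the cyclic group $\mu(L)$ together with a place-by-place local analysis. For (i) I argue by degrees: if $\mu(L)\ne 1$ then $L$ contains a primitive $p$-th root of unity $\zeta_p$, so $K(\zeta_p)\subseteq L$ has degree over $K$ dividing both $p-1$ and $[L:K]=p$, hence dividing $\gcd(p-1,p)=1$; thus $\zeta_p\in K$, contradicting $\mu(K)=1$. Therefore $\mu(L)=1$ and $H^1(G,\mu(L))=0$.

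For (ii) I use the cyclic-group description $H^1(G,\mu(L))=\ker(\No_{L/K})/\mu(L)^{1-s}$, the central point being that $\No_{L/K}$ acts as the $p$-th power on $\mu(L)$ in both cases. This is immediate when $s$ fixes $\mu(L)$ (the case $\mu(K)=\mu(L)$), giving $\No_{L/K}(\mu(L))=\mu(K)^{p}$, $\mu(L)^{1-s}=1$, and $H^1(G,\mu(L))=\mu(L)[p]\simeq\Z/p\Z$. In the cyclotomic case $\mu(L)=\langle\zeta\rangle$ has order $p^{m+1}$, $\mu(K)=\langle\zeta^{p}\rangle$, and $s(\zeta)=\zeta^{u}$ with $u=1+p^{m}t$, $t\in\Z_p^{\times}$; from $u^{i}\equiv 1+i\,p^{m}t\pmod{p^{m+1}}$ one gets the norm exponent $\sum_{i=0}^{p-1}u^{i}\equiv p+\tfrac{p-1}{2}\,t\,p^{m+1}\pmod{p^{m+1}}$, and the extra term is a genuine multiple of $p^{m+1}$—hence $\sum_{i=0}^{p-1}u^{i}\equiv p$—precisely because $p>2$ makes $\tfrac{p-1}{2}$ integral. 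Thus $\No_{L/K}(\zeta)=\zeta^{p}$, so $\No_{L/K}(\mu(L))=\mu(L)^{p}=\mu(K)$ and $\ker(\No_{L/K})=\mu(L)[p]$ has order $p$; since $\mu(L)^{1-s}=\langle\zeta^{1-u}\rangle=\langle\zeta^{p^{m}}\rangle=\langle\zeta_1\rangle$ with $\zeta_1\in\mu(K)$ of order $p$ is exactly this same subgroup, the quotient is trivial and $H^1(G,\mu(L))=0$.

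For (iii), the order of $\zeta_L$ follows by applying the norm to the hypothesis: since $\No_{L/K}\,(1-s)=0$ in $\Z[G]$, the relation $\xi_L^{1-s}=i_L(\zeta_L)$ yields $\No_{L/K}\bigl(i_L(\zeta_L)\bigr)=1$; by compatibility of the semilocal norm with the global norm this equals $i_K\bigl(\No_{L/K}(\zeta_L)\bigr)$, and the injectivity of $i_K$ on units (Lemme~\ref{leopoldt}) forces $\No_{L/K}(\zeta_L)=1$, whence part (ii) bounds the order of $\zeta_L$ by $p$. For the last two identities I decompose $W_L=\bigoplus_{w\div p}\mu(L_w)$ as a $G$-module according to the places of $K$: at a non-split $v$ the decomposition group is all of $G$ and $\mu(L_w)^{G}=\mu(K_v)$, while at a split $v$ the group $G$ cyclically permutes the $p$ conjugate factors, whose fixed part is the diagonal $\mu(K_v)$; summing over $v$ gives $W_L^{G}=W_K$. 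Finally $W_L^{p}\subseteq W_K$ reduces to $\mu(L_w)^{p}\subseteq\mu(K_v)$ for each $w\div v$: this is trivial when $v$ splits (then $L_w=K_v$), and when $[L_w:K_v]=p$ the local analogue of the degree argument in (i) shows that the $p$-primary cyclotomic level rises by at most one, i.e. $\mu(L_w)^{p}\subseteq\mu(K_v)$.

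The two steps that genuinely use the hypotheses, and that I would treat most carefully, are the norm computation in (ii)—where the vanishing of the error term modulo $p^{m+1}$ depends on $p>2$ and is exactly what forces $\ker(\No_{L/K})$ and $\mu(L)^{1-s}$ to coincide, killing $H^1$ in the cyclotomic case—and the local--global bridge in (iii), where one must convert the semilocal identity $\xi_L^{1-s}=i_L(\zeta_L)$ into a genuine global norm condition before invoking the injectivity furnished by Leopoldt. The remaining place-by-place computations of $W_L^{G}$ and of the cyclotomic growth bound are routine once the split and non-split cases are separated.
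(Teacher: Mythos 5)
Your proof is correct and follows essentially the same route as the paper's: the degree argument for (i), the standard norm/Herbrand-quotient computation on the cyclic module $\mu(L)$ for (ii) (which the paper simply cites to Washington), and the split/non-split local analysis together with the ``cyclotomic level rises by at most one'' bound for (iii). You in fact supply two details the paper's proof leaves implicit --- the explicit norm-exponent computation $\sum_{i}u^{i}\equiv p$ using $p>2$, and the derivation that $\zeta_L$ has order dividing $p$ by applying the algebraic norm to $\xi_L^{1-s}=i_L(\zeta_L)$ --- and both are carried out correctly.
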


\begin{proof} (i) Si $\mu(K)=1$ et si $L$ contenait $\zeta_1$ 
d'ordre $p$,
on aurait $K \subset K(\zeta_1) \subseteq L$ o\`u $K(\zeta_1)/K$
serait de degr\'e \'egal \`a un diviseur de $p-1$ diff\'erent de 1 (absurde). 

\smallskip
(ii) Ce point est classique (e.g. \cite{Wa}).

\smallskip
(iii) Soit $\xi_L \in W_L$. On a $\xi_L^p\in W_{K}$ car si $\xi_{w} \in \mu(L_w)$ est une composante de 
$\xi_{L}$ telle que $\xi_{w}\notin \mu(K_{v})$ alors, 
comme pour le cas global, $\xi_{w}^p \in \mu(K_{v}$.
Comme $W_L = {\rm tor}_{\Z_p}(U_L)$ et que $U_L^G = U_K$, il  en r\'esulte que 
$W_L^G =  {\rm tor}_{\Z_p}(U_K) =W_K$.
\end{proof}

\section{Etude du ``transfert'' $j_{L/K} : {\mathcal C}_K \too {\mathcal C}_L$}

\subsection{G\'en\'eralit\'es}\label{gene} On suppose d\'esormais $p \ne 2$.
On s'int\'eresse au noyau du transfert $j_{L/K} : {\mathcal B\mathcal 
P}_K = {\rm tor}_{\Z_p} ({\mathcal C}_K) \too {\mathcal B\mathcal P}_L = {\rm tor}_{\Z_p}( {\mathcal C}_L)$
(notations et d\'efinitions du \S\,\ref{sch}),
dans une $p$-extension $L/K$, ult\'erieurement cyclique de degr\'e $p$.

\smallskip
Le transfert ${\mathcal C}_K \simeq {\mathcal A}_K/{\mathcal W_K} 
\too {\mathcal C}_L \simeq {\mathcal A}_L/ {\mathcal W_L}$ a m\^eme 
noyau (sous-groupe fini de ${\mathcal C}_K$, donc de torsion). On peut alors travailler
sur ce transfert plus simple not\'e encore $j_{L/K}$. 

\smallskip
Son noyau est aussi un noyau de capitulation de classes d'id\'eaux
puisque ${\mathcal A}_K \simeq {\mathcal I}_K/ {\mathcal P}_{K,\infty}$ et 
${\mathcal A}_L\simeq  {\mathcal I}_L/ {\mathcal P}_{L,\infty}$ sont des groupes de 
classes d'id\'eaux~; aussi en raison de la m\'ethode utilis\'ee, 
nous parlerons pour $j_{L/K}$ de {\it morphisme de capitulation}
et pour ${\rm Ker}(j_{L/K})$ de {\it noyau de capitulation}, comme dans \cite{Ja6}, \cite{Ng2}.

\smallskip
Faisons un rappel sur le morphisme g\'en\'eral de capitulation ${\mathcal A}_K\too {\mathcal A}_L $ en 
$p$-ramifi\-cation Ab\'elienne (\cite{Gr1}, Th\'eor\`eme IV.2.1)~:

\begin{lemma}\label{injectif} Dans une extension $L/K$ quelconque de 
corps de nombres, le morphisme de capitulation 
${\mathcal A}_K \to {\mathcal A}_L$ est  injectif sous la conjecture de Leopoldt pour $p$ et la cl\^oture galoisienne 
de $L$ sur $K$. Il en r\'esulte l'injectivit\'e de ${\mathcal T}_K = {\rm tor}_{\Z_p}({\mathcal A}_K) \too 
{\mathcal T}_L= {\rm tor}_{\Z_p}({\mathcal A}_L)$.
\end{lemma}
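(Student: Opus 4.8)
The plan is to reduce to the Galois case and then use Lemma~\ref{leopoldt} to descend an infinitesimal generator from $L$ to $K$.

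\emph{Reduction to the Galois case.} Let $N$ be the Galois closure of $L/K$; by hypothesis Leopoldt holds for $N$, hence for every subfield, in particular for $K$ and $L$. Since extension of ideals is transitive, $\mathfrak a\,{\mathcal O}_N=(\mathfrak a\,{\mathcal O}_L)\,{\mathcal O}_N$, and so the capitulation morphism ${\mathcal A}_K\to{\mathcal A}_N$ factors through ${\mathcal A}_K\to{\mathcal A}_L\to{\mathcal A}_N$. It therefore suffices to prove that ${\mathcal A}_K\to{\mathcal A}_N$ is injective, so I may assume $L/K$ Galois with group $G$.

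\emph{The Galois case.} Using the identifications ${\mathcal A}_K\simeq{\mathcal I}_K/{\mathcal P}_{K,\infty}$ and ${\mathcal A}_L\simeq{\mathcal I}_L/{\mathcal P}_{L,\infty}$, I represent a class in the kernel by an ideal $\mathfrak a\in{\mathcal I}_K$ with $\mathfrak a\,{\mathcal O}_L=(y)$ for some infinitesimal $y$ of $L$, i.e. $i_L(y)=1$. As $\mathfrak a\,{\mathcal O}_L$ is $G$-invariant, for each $s\in G$ the element $y^{\,1-s}$ has trivial ideal, hence lies in ${\mathcal E}_L=E_L\otimes\Z_p$ (the kernel of $x\mapsto(x)$, by flatness of $\Z_p$). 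Now $i_L(y^{\,1-s})=i_L(y)^{1-s}=1$, so Lemma~\ref{leopoldt} (applied with $e=0$) forces $y^{\,1-s}=1$. Thus $y$ is $G$-invariant.

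\emph{Descent and conclusion.} Since taking $G$-invariants commutes with the (flat) tensor product by $\Z_p$, the relations $y^{\,1-s}=1$ give $y\in({\mathcal K}^\times_{L,1})^G={\mathcal K}_1^\times$, so $y$ comes from $K^\times$; moreover $i_K(y)=1$ because $U_K\hookrightarrow U_L$, whence $y\in{\mathcal K}^\times_\infty$. Finally, the extension-of-ideals map ${\mathcal I}_K\to{\mathcal I}_L$ is injective, and both $\mathfrak a$ and $y\,{\mathcal O}_K$ map to $\mathfrak a\,{\mathcal O}_L=y\,{\mathcal O}_L$, so $\mathfrak a=y\,{\mathcal O}_K\in{\mathcal P}_{K,\infty}$ and its class in ${\mathcal A}_K$ is trivial. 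This proves injectivity of ${\mathcal A}_K\to{\mathcal A}_L$; restricting to torsion subgroups immediately yields the injectivity of ${\mathcal T}_K\to{\mathcal T}_L$.

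The heart of the argument — and the only place the Leopoldt hypothesis intervenes — is the descent step: one must know that the infinitesimal unit $y^{\,1-s}$ is genuinely trivial, which is exactly the content of Lemma~\ref{leopoldt}, and one must check that $G$-invariance survives $\otimes\,\Z_p$ so that the generator $y$ descends to $K$ itself, and not merely to $K^\times\otimes\Q_p$. The reduction to the Galois closure through transitivity of ideal extension, and the passage to torsion subgroups, are routine.
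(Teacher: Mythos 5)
Your argument is correct. Note first that the paper does not actually prove this lemma: it is quoted as a classical consequence of the Leopoldt conjecture, with references to \cite{Gr2} (Th\'eor\`eme I.1) and \cite{Gr1} (Th\'eor\`eme IV.2.1). What you have written is a self-contained reconstruction of that classical proof, and it is carried out in exactly the ideal-theoretic style the paper uses elsewhere (compare the manipulations with $y$, $y_\infty$ and the $G$-action in \S\,2.3). The two substantive points are handled properly: (a) the only place Leopoldt enters is the vanishing of the infinitesimal unit $y^{1-s}$, which is precisely Lemme \ref{leopoldt} with $e=0$ applied to the Galois closure $N$ (and your preliminary remark that Leopoldt for $N$ descends to subfields, while true and standard, is not even needed for the argument as you have structured it); (b) the descent of the generator $y$ to ${\mathcal K}_1^\times$ and the passage from $(y)_L=({\mathfrak a})_L$ to ${\mathfrak a}=(y)_K$ both rest on flatness of $\Z_p$ (so that $G$-invariants and kernels commute with $\otimes\,\Z_p$) and on the injectivity of ideal extension on the free abelian groups $I_K\to I_L$, which you correctly identify. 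The reduction to the Galois closure via transitivity of extension of ideals is also sound. In short: the paper buys the statement by citation; you buy it by the standard descent argument, and nothing is missing.
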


De fait, la premi\`ere preuve de ce r\'esultat a \'et\'e donn\'ee en 1982 dans \cite{Gr2}, Th\'eor\`eme I.1, d\'evelopp\'ee dans 
\cite{Gr3}, puis redonn\'ee dans d'autres cadres techniques, comme dans \cite{Gr1}, \cite{Ja1}, \cite{Ja2}, \cite{Ng1}, et constitue une 
propri\'et\'e classique, caract\'eristique de la conjecture de Leopoldt, qui pourrait simplifier consid\'erablement l'approche de 
\cite{Seo3}, \cite{Seo4}, \cite{Seo6}.

\subsection{Identification de ${\mathcal W_K}$ comme groupe de classes d'id\'eaux}\label{idw}
On a la suite exacte $1 \to {\mathcal K}^\times_\infty {\mathcal E}_K / {\mathcal E}_K 
\tooo {\mathcal K}_1^\times/ {\mathcal E}_K \ds\mathop{\tooo}^{i_K} U_K / \overline E_K \to 1$,
o\`u $\overline E_K = i_K( {\mathcal E}_K)$ est l'adh\'erence de $E_K$ dans $U_K$, et les isomorphismes~:
$$U_K / \overline E_K\simeq {\mathcal K}_1^\times/{\mathcal K}^\times_\infty {\mathcal E}_K 
\simeq  {\mathcal P}_K/{\mathcal P}_{K,\infty}  \simeq  {\mathcal B}_K = {\rm Gal}(H_K^{\rm pr}/H_K). $$

L'isomorphisme $U_K / \overline E_K \too {\mathcal P}_K/{\mathcal P}_{K,\infty}$ est donc ainsi d\'efini~:
\`a partir de $u \in U_K$ on prend $x \in {\mathcal K}_1^\times$ d'image $u$ par $i_K$ 
($x$ est d\'efini modulo ${\mathcal K}^\times_\infty$)
et on consid\`ere la classe de l'id\'eal principal $(x)$ modulo 
${\mathcal P}_{K,\infty}$, qui ne d\'epend pas du choix de $x$.
La surjectivit\'e est \'evidente, et si $u$ conduit \`a 
$(x)=(x_\infty) \in {\mathcal P}_{K,\infty}$, on a $x=x_\infty 
\varepsilon_K$, $\varepsilon_K \in {\mathcal E}_K$, d'o\`u 
$i_K(x)=u=i_K(\varepsilon_K) \in \overline E_K$. 

\smallskip
On suppose que la conjecture de Leopoldt est v\'erifi\'ee pour toute extension $L$ de $K$ consid\'er\'ee~:

\begin{lemma}\label{W} (i) L'image de ${\mathcal W_K} = W_K/i_K(\mu(K))$, 
dans ${\mathcal P}_K /{\mathcal P}_{K,\infty}$, est form\'ee des classes modulo ${\mathcal P}_{K,\infty}$ des id\'eaux de 
${\mathcal P}_K$ de la forme $(x)$, o\`u $i_K(x) = \xi_K \in W_K$, ce que l'on peut r\'esumer par l'isomorphisme
${\mathcal W_K} \simeq \{(x),\ x \in {\mathcal K}_1^\times,\  i_K(x) \in W_K \}\,.\, {\mathcal P}_{K,\infty} /{\mathcal P}_{K,\infty}$.

\smallskip
(ii) Le morphisme de capitulation $j_{L/K}$ est l'extension des ``classes d'id\'eaux'' ~:
$${\mathcal I}_K/ \{(x),\ x \in {\mathcal K}_1^\times,\  i_K(x) \in 
W_K \} \,.\, {\mathcal P}_{K,\infty} \too {\mathcal I}_L/\{(y),\ y \in {\mathcal L}_1^\times,\  i_{L}(y) \in 
W_L \} \,.\, {\mathcal P}_{L,\infty} . $$
\end{lemma}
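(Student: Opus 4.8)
The plan is to deduce both statements from the ideal-theoretic description of $\mathcal B_K={\rm Gal}(H_K^{\rm pr}/H_K)$ recalled in \S\ref{idw}, namely the chain of isomorphisms $U_K/\overline E_K \simeq {\mathcal K}_1^\times/{\mathcal K}^\times_\infty {\mathcal E}_K \simeq {\mathcal P}_K/{\mathcal P}_{K,\infty}$, under which the class of $u\in U_K$ corresponds to the class of $(x)$ for any $x\in{\mathcal K}_1^\times$ with $i_K(x)=u$.

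For (i), I would first identify ${\mathcal W}_K$ with ${\rm tor}_{\Z_p}(U_K/\overline E_K)$. Lemma \ref{leopoldt} gives precisely ${\rm tor}_{\Z_p}(U_K/\overline E_K)=W_K/i_K(\mu(K))={\mathcal W}_K$: every torsion class is represented by an element of $W_K$, and the kernel of $W_K\to U_K/\overline E_K$, namely $W_K\cap\overline E_K$, equals $i_K(\mu(K))$, since $i_K(\varepsilon)\in W_K$ with $\varepsilon\in{\mathcal E}_K$ is torsion and so forces $\varepsilon\in\mu(K)$ by that lemma. Transporting the image of $W_K$ through the isomorphism above, a class $\xi_K\in W_K$ is sent to the class of $(x)$ with $i_K(x)=\xi_K$, and conversely every $(x)$ with $x\in{\mathcal K}_1^\times$, $i_K(x)\in W_K$, arises this way. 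This yields the stated description of the image of ${\mathcal W}_K$ in ${\mathcal P}_K/{\mathcal P}_{K,\infty}$ together with the isomorphism ${\mathcal W}_K\simeq\{(x),\ x\in{\mathcal K}_1^\times,\ i_K(x)\in W_K\}\,.\,{\mathcal P}_{K,\infty}/{\mathcal P}_{K,\infty}$.

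For (ii), I would combine (i) with ${\mathcal A}_K\simeq{\mathcal I}_K/{\mathcal P}_{K,\infty}$ and the identification ${\mathcal C}_K\simeq{\mathcal A}_K/{\mathcal W}_K$ from \S\ref{gene}. Writing the subgroup appearing in (i) as $\mathcal M_K:=\{(x),\ i_K(x)\in W_K\}\,.\,{\mathcal P}_{K,\infty}$, the third isomorphism theorem gives ${\mathcal C}_K\simeq{\mathcal I}_K/\mathcal M_K$, and similarly over $L$. Since in the ideal-class formalism the capitulation morphism ${\mathcal A}_K\too{\mathcal A}_L$ is just the extension-of-ideals map ${\mathcal I}_K\too{\mathcal I}_L$ (\cite{Gr1}, Th\'eor\`eme IV.2.1), it only remains to verify that this map descends to the quotients, i.e. that $\mathcal M_K$ is carried into $\mathcal M_L$.

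This last compatibility is the only step that is not purely formal, and is where I expect the real (if mild) work to lie. Here I would use functoriality of the local embeddings: for $x\in{\mathcal K}_1^\times\subseteq{\mathcal L}_1^\times$ the element $i_L(x)\in U_L$ is the image of $i_K(x)$ under the diagonal inclusion $U_K=\bigoplus_{v\div p}U_v\hookrightarrow\bigoplus_{w\div p}U_w=U_L$ coming from the completions $K_v\hookrightarrow L_w$. Consequently $i_K(x)\in W_K$ forces $i_L(x)\in W_L$ (as $W_K$ maps into $W_L$), while $i_K(x_\infty)=1$ forces $i_L(x_\infty)=1$; thus extension of ideals sends $\{(x),\ i_K(x)\in W_K\}$ into $\{(y),\ i_L(y)\in W_L\}$ and ${\mathcal P}_{K,\infty}$ into ${\mathcal P}_{L,\infty}$, so $\mathcal M_K\subseteq\mathcal M_L$ and $j_{L/K}$ is exactly the asserted morphism of ideal classes.
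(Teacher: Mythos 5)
Your proof is correct and follows essentially the same route as the paper: point (i) by transporting ${\rm tor}_{\Z_p}(U_K/\overline E_K)={\mathcal W}_K$ (Lemme \ref{leopoldt}) through the isomorphism $U_K/\overline E_K\simeq{\mathcal P}_K/{\mathcal P}_{K,\infty}$, and point (ii) via ${\mathcal C}_K\simeq{\mathcal A}_K/{\mathcal W}_K\simeq{\mathcal I}_K/\mathcal M_K$. The only difference is that you make explicit the compatibility $\mathcal M_K\subseteq\mathcal M_L$ that the paper dismisses with ``d'o\`u facilement le point (ii)'', which is a welcome precision rather than a divergence.
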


\begin{proof} Le point (i) r\'esulte de la d\'efinition ci-dessus de $U_K / \overline E_K \too {\mathcal P}_K/{\mathcal P}_{K,\infty}$
appliqu\'ee au sous-groupe de torsion ${\rm tor}_{\Z_p}(U_K/ \ov E_K) = {\mathcal W}_K$ (Lemme \ref{leopoldt}).

\smallskip
 Il en r\'esulte que
${\mathcal C}_K \simeq {\mathcal A}_K/ {\mathcal W_K} \simeq {\mathcal I}_K/
\{(x),\ x \in {\mathcal K}_1^\times,\  i_K(x) \in W_K \} \,.\, {\mathcal P}_{K,\infty}$,
d'o\`u facilement le point (ii) du lemme.
On rappelle que ${\mathcal I}_K$ et ${\mathcal I}_L$ sont form\'es 
d'id\'eaux \'etrangers \`a $S$ ($S$ contenant $S_p$ et les places ramifi\'ees dans $L/K$).
\end{proof}

\subsection{Caract\'erisation du noyau de capitulation (cas cyclique de degr\'e $p$)}\label{ker}
Soit ${\mathfrak a} \in {\mathcal I}_K$ tel que par extension, $({\mathfrak a})_L= (y)(y_\infty)$ o\`u 
$y \in {\mathcal L}_1^\times$, $y_\infty \in {\mathcal L}^\times_\infty$, 
avec $i_{L}(y)  = \xi_L \in W_L$ et $i_{L}(y_\infty)  =1$ (Lemme \ref{W}\,(ii)). 

\medskip
A partir de $({\mathfrak a})_L= (y\,.\,y_\infty)$, la norme $ \No_{L/K}$ conduit en id\'eaux \`a  la relation
${\mathfrak a}^p = (\No_{L/K}(y \,.\, y_\infty) )$~; posons
$\No_{L/K}(y) = x \in  {\mathcal K}_1^\times$ et $\No_{L/K}(y_\infty)=x_\infty \in  {\mathcal K}_\infty^\times$, 
ce qui donne dans $L$ la relation $y^p \,.\, y_\infty^p = x\,.\,x_\infty\,.\,\varepsilon_L$, $\varepsilon_L \in {\mathcal E}_L$.
Mais $i_L(y^p) = i_L( x) \,.\,  i_{L}(\varepsilon_L)$, ou encore
$i_L(y^p) =\No_{L/K}( i_L( y) )\,.\,  i_{L}(\varepsilon_L)$ qui s'\'ecrit par hypoth\`ese
$\xi_L^p = \No_{L/K}( \xi_L )\,.\,  i_{L}(\varepsilon_L)$~; donc (Lemme \ref{zeta}, preuve de (iii)), on a $\xi_L^p\in W_K$ et
$i_{L}(\varepsilon_L)\in W_K$, et d'apr\`es le Lemme \ref{zeta} on a $\varepsilon_L = \zeta_K \in \mu(K)$.

\smallskip
On a donc en posant $\alpha :=  x \,.\, x_\infty \,.\,\zeta_K \in  {\mathcal K}_1^\times$, la relation
$\alpha = (y \,.\, y_\infty)^p$ que l'on interpr\`ete de la fa\c con suivante~:

\smallskip
(i) Cas $y \,.\, y_\infty  = x' \in {\mathcal K}_1^\times$. On a $i_L(y) = i_L(x')=\xi_L \in W_L \, \cap\, U_L^G$, donc 
$i_L(y) =\xi_K \in W_K$ et $y$ est de la forme $y = x'' \,.\, y'_\infty$ avec $i_K(x'')=\xi_K$,
d'o\`u $x'= y \,.\, y_\infty = x'' \,.\, y''_\infty$~; donc $y''_\infty = x''_\infty \in {\mathcal K}_\infty^\times$
et il vient $({\mathfrak a})_L= (x'' \,.\, x''_\infty)_L$ qui se redescend en 
${\mathfrak a}= (x'' \,.\, x''_\infty)$~; or $i_L(x'') = \xi_K \in W_K$, d'o\`u la trivialit\'e de la classe
de ${\mathfrak a}$ dans ${\mathcal C}_K$ (Lemme \ref{W}\,(i)).

\medskip
(ii) Cas $ y \,.\, y_\infty \notin {\mathcal K}_1^\times$. On est n\'ecessairement dans un cas kummerien o\`u
$L=K(\sqrt[p]{\alpha})$ et $\mu(K) \ne 1$ puisque $(y \,.\, y_\infty)^p = \alpha \in {\mathcal K}_1^\times$
(de fait on prend pour $\alpha$ un repr\'esentant de $x \,.\, x_\infty \,.\,\zeta_K$,
modulo ${\mathcal K}_1^{\times p}$, dans $K_1^\times$, afin de donner un sens \`a l'extension kummerienne $L$). 
Comme $\alpha$ est par d\'efinition \'etranger \`a $S$, et que la ramification mod\'er\'ee se lit sur l'id\'eal $(\alpha)$, 
il en r\'esulte que $L/K$ est $p$-ramifi\'ee. 

\medskip
On d\'eduit de ce qui pr\'ec\`ede qu'en dehors du cas (ii), il ne peut y avoir capitulation non triviale et
on a donc obtenu le r\'esultat partiel suivant remarqu\'e dans \cite{Seo6}

\begin{theorem} \label{trivial} Soit $L/K$ une $p$-extension.
Sous la conjecture de Leopoldt pour $L$ et $p>2$, et si $\mu(K)=1$, le morphisme de capitulation 
$j_{L/K} : {\mathcal B\mathcal P}_K \too {\mathcal B\mathcal P}_L$ 
est injectif.
\end{theorem}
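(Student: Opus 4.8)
Le plan est de se ramener au cas cyclique de degré $p$ traité au \S\,\ref{ker}, par un dévissage le long d'une tour de telles extensions, en propageant les hypothèses $\mu = 1$ et Leopoldt le long de cette tour. Comme $L/K$ est une $p$-extension, $G = {\rm Gal}(L/K)$ est un $p$-groupe fini ; en choisissant une suite de composition de $G$ (chaque terme étant normal dans le précédent, les quotients successifs étant isomorphes à $\Z/p\Z$ puisque le seul $p$-groupe simple est cyclique d'ordre $p$) et en passant aux corps fixes, j'obtiens une tour $K = K_0 \subset K_1 \subset \cdots \subset K_n = L$ dont chaque étage $K_{i+1}/K_i$ est cyclique de degré $p$. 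Dans la description en idéaux du Lemme \ref{W}\,(ii), le morphisme de capitulation $j_{L/K}$ n'est autre que l'extension des idéaux $\mathfrak{a} \mapsto (\mathfrak{a})_L$, qui est manifestement transitive ; on a donc $j_{L/K} = j_{K_n/K_{n-1}} \circ \cdots \circ j_{K_1/K_0}$, et il suffit de prouver l'injectivité de chaque $j_{K_{i+1}/K_i}$.

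Pour appliquer le résultat du \S\,\ref{ker} à l'étage $K_{i+1}/K_i$, il faut disposer de $\mu(K_i) = 1$ et de la conjecture de Leopoldt pour $K_i$. La première condition se propage exactement par le Lemme \ref{zeta}\,(i) : partant de $\mu(K_0) = \mu(K) = 1$, chaque étage étant cyclique de degré $p > 2$, une récurrence immédiate donne $\mu(K_i) = 1$ pour tout $i$. Pour la seconde, j'utiliserais la descente de la conjecture de Leopoldt aux sous-corps : l'application de Leopoldt de $K_i$, induite par $i_{K_i} : {\mathcal E}_{K_i} \too U_{K_i}$, composée avec l'injection $U_{K_i} \hookrightarrow U_L$, coïncide avec ${\mathcal E}_{K_i} \hookrightarrow {\mathcal E}_L \too U_L$ ; cette dernière étant injective sous Leopoldt pour $L$, l'application de $K_i$ l'est aussi, d'où Leopoldt pour $K_i$.

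Les deux hypothèses étant ainsi acquises au niveau $i$, l'analyse par cas du \S\,\ref{ker} s'applique à $K_{i+1}/K_i$ : comme $\mu(K_i) = 1$, la situation kummerienne du cas (ii) est exclue, de sorte que tout élément du noyau de capitulation relève du cas (i) et est déjà trivial dans ${\mathcal C}_{K_i}$ ; ainsi $j_{K_{i+1}/K_i}$ est injectif. La composée de ces morphismes injectifs fournit l'injectivité de $j_{L/K}$. Le point qui me semble demander le plus d'attention est précisément le contrôle des hypothèses aux étages intermédiaires — la descente de Leopoldt aux $K_i$ et la propagation de $\mu = 1$ — la transitivité de l'extension des idéaux et l'analyse cyclique du \S\,\ref{ker} faisant ensuite fonctionner le dévissage sans difficulté nouvelle.
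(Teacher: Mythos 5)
Votre démonstration est correcte et suit essentiellement la même voie que celle de l'article : réduction au cas cyclique de degré $p$ traité au \S\,\ref{ker} en remarquant que les hypothèses ($\mu(k)=1$ et la conjecture de Leopoldt) se transmettent à tout corps intermédiaire $k$ entre $K$ et $L$. Vous ne faites qu'expliciter ce que l'article résume en une phrase — dévissage par une suite de composition du $p$-groupe $G$, transitivité de l'extension des idéaux, propagation de $\mu=1$ par le Lemme \ref{zeta}\,(i) et descente de Leopoldt aux sous-corps — et ces détails sont exacts.
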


\begin{proof}
Comme $L/K$ est une $p$-extension, on est ramen\'e au cas cyclique ci-dessus en 
remarquant que les hypoth\`eses se transmettent \`a tout corps $k$ 
compris entre $K$ et $L$ ($\mu(k)=1$ et conjecture de Leopoldt).
\end{proof}

Une condition n\'ecessaire pour ${\rm Ker}(j_{L/K}) \ne 1$ est donc 
$\mu(K) \ne 1$ et  $L/K$  $p$-ramifi\'ee.  

\smallskip
Poursuivons l'\'etude du noyau de capitulation avec la seule hypoth\`ese
$L/K$ $p$-ramifi\'ee et  par cons\'equent $S=S_p$ 
(l'hypoth\`ese kummerienne n'est pas n\'ecessaire pour l'instant car on la retrouvera dans le calcul du noyau).
On revient \`a  ${\mathfrak a} \in {\mathcal I}_K$ tel que par extension, 
$({\mathfrak a})_L= (y \,.\, y_\infty)$ o\`u $y \in {\mathcal L}_1^\times$, 
$y_\infty \in {\mathcal L}^\times_\infty$, avec $i_{L}(y)  = \xi_L \in W_L$ et $i_{L}(y_\infty)  =1$. 

\smallskip
Soit $s$ un g\'en\'erateur de $G := {\rm Gal}(L/K)$. Il vient alors $({\mathfrak a})_L^{1-s}= (1) = (y^{1-s}) 
(y_\infty^{1-s})$, d'o\`u $y^{1-s} y_\infty^{1-s} = \zeta_K$ o\`u $\zeta_K \in \mu(K)$ est d'ordre $1$ ou $p$ puisque
$(y\, .\, y_\infty)^p = \alpha \in  {\mathcal K}^\times_1$.

\smallskip
R\'eciproquement, si $\xi_L \in W_L$ est telle que
$\xi_L^{1-s}=i_L(\zeta_K)$, en prenant $y \in {\mathcal L}_1^\times$ tel que
$i_L(y)= \xi_L$, il vient $i_{L}(\zeta_K) = i_L(y)^{1-s}$, d'o\`u 
$\zeta_K = y^{1-s} y'_\infty$, $y'_\infty \in {\mathcal L}^\times_\infty$~;
on a $\No_{L/K}(y'_\infty )= \No_{L/K}(\zeta_K) =1$ d'apr\`es ce qui pr\'ec\`ede~; le 
Th\'eor\`eme 90 de Hilbert--Speiser--N\oe ther dans ${\mathcal L}^\times_\infty$
(\cite{Gr1}, preuve du Lemme IV.3.1), 
implique $y'_\infty = y_\infty^{1-s}$, $y_\infty \in {\mathcal L}^\times_\infty$, 
et on obtient une relation de la forme $\zeta_K = y^{1-s} y_\infty^{1-s}$ ce qui conduit \`a 
$(y \,y_\infty) \in {\mathcal I}_L^G=({\mathcal I}_K)_L$ car il n'y a pas de ramification en dehors de $p$.
 L'id\'eal $(y \, y_\infty)$ de $L$ est l'\'etendu d'un id\'eal ${\mathfrak a}$ de $K$ \'etranger \`a $p$
dont la classe est dans le noyau de $j_{L/K}$. On a donc \`a ce stade~:

\medskip\noindent
${\rm Ker}(j_{L/K}) = \{{\mathfrak a} \in {\mathcal I}_K, \, 
({\mathfrak a})_L= (y)(y_\infty),   i_L(y)=\xi_L \    \&\   \xi_L^{1-s} \in i_L(\mu(K)) \} \,.\, {\mathcal P}_{K,\infty} $

\hfill $\,\big / \,\{(x),\ x \in {\mathcal K}_1^\times,\  i_K(x) \in W_K 
\}\,.\, {\mathcal P}_{K,\infty}$,

\smallskip\noindent
qu'il reste \`a identifier.

\medskip
Soit ${\mathfrak a} \in {\mathcal I}_K$ que $({\mathfrak a})_L= (y)(y_\infty)$, $i_{L}(y)  = \xi_L \in W_L$,  
$\xi_L^{1-s} = i_L(\zeta_K)$~; si  ${\mathfrak a} = (x\,x_\infty)$ avec $i_K(x)=\xi_K \in W_K$ et $i_K(x_\infty)=1$,
il vient $i_L(y)=i_L(x) i_L(\varepsilon_L)$, $\varepsilon_L \in {\mathcal  E}_L$, soit $\xi_L=\xi_K \, i_L(\varepsilon_L)$~;
donc  $i_L(\varepsilon_L)$ est de la forme $i_L(\zeta_L)$, $\zeta_L \in \mu(L)$,
d'o\`u $\xi_L \in W_K\, i_L(\mu(L))$.

\smallskip
R\'eciproquement, si $\xi_L=\xi_K i_L(\zeta_L)\in W_K\, i_L(\mu(L))$
(qui implique $\xi_L^{1-s} = i_L(\zeta_K)$),
on a $i_L(y) = i_L(x)\, i_L(\zeta_L)$ pour $x \in {\mathcal K}_1^\times$, 
d'o\`u, en id\'eaux, $(y) = (x)_L (y'_\infty)$ et $({\mathfrak a})_L = (x)_L 
(y''_\infty)$, o\`u $(y''_\infty) \in {\mathcal P}_{L,\infty}^G=
{\mathcal P}_{K,\infty}$, d'o\`u ${\mathfrak a} = (x)\,(x_\infty)$
avec $i_K(x) \in W_K$.

\smallskip
 La classe de ${\mathfrak a}$ dans le noyau de $j_{L/K}$ est nulle si 
et seulement si dans l'\'ecriture 
$({\mathfrak a})_L= (y)(y_\infty)$, on a $i_L(y) \in W_K\, 
i_L(\mu(L))$.
On a donc obtenu une description du noyau de capitulation dans le cas $L/K$ cyclique $p$-rami\-fi\'ee de degr\'e $p$
(analogue du Th\'eor\`eme 2.1 de \cite{Ng2} et Proposition 8 de \cite{Ja6})~:

\begin{lemma}\label{cns} On a ${\rm Ker}(j_{L/K}) \simeq \{\xi_L \in 
W_L, \  \xi_L^{1-s} \in i_L(\mu(K))\} / W_K\, i_L(\mu(L))$.
Si de plus $\mu(K)=1$, on a $i_L(\mu(L))=1$ et $\{\xi_L \in 
W_L, \  \xi_L^{1-s} =1\}=W_K$.
\end{lemma}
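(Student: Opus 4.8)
The plan is to realise the stated isomorphism as the map induced by the assignment ${\mathfrak a}\mapsto i_L(y)$, where $(\mathfrak a)_L=(y)(y_\infty)$ is the decomposition supplied by the description of ${\rm Ker}(j_{L/K})$ recalled just above. Write ${\rm Ker}(j_{L/K})=N/D$, with numerator $N=\{{\mathfrak a}\in{\mathcal I}_K,\ (\mathfrak a)_L=(y)(y_\infty),\ i_L(y)=\xi_L\ \&\ \xi_L^{1-s}\in i_L(\mu(K))\}\,.\,{\mathcal P}_{K,\infty}$ and denominator $D=\{(x),\ x\in{\mathcal K}_1^\times,\ i_K(x)\in W_K\}\,.\,{\mathcal P}_{K,\infty}$, and set $\Psi({\mathfrak a}):=i_L(y)=\xi_L$. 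The target group $\{\xi_L\in W_L,\ \xi_L^{1-s}\in i_L(\mu(K))\}\big/W_K\,i_L(\mu(L))$ is well-formed: $W_K=W_L^G$ is fixed by $s$ and $\mu(L)^{1-s}\subseteq\mu(K)$ by Lemma \ref{zeta}, so $W_K\,i_L(\mu(L))$ does sit inside the numerator set. The crucial input — that the class of ${\mathfrak a}$ in $N/D$ is trivial exactly when $\xi_L\in W_K\,i_L(\mu(L))$ — is precisely the equivalence established in the two paragraphs preceding the statement; so the core computation is already done, and what remains is to package $\Psi$ into a well-defined isomorphism.

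First I would check that $\Psi$ descends to $N/D$. Changing the representative ${\mathfrak a}$ by a generator $(x_\infty)\in{\mathcal P}_{K,\infty}$ modifies $(\mathfrak a)_L$ only inside ${\mathcal P}_{L,\infty}$, hence modifies $y\,y_\infty$ only inside ${\mathcal L}^\times_\infty$, leaving $\xi_L=i_L(y)$ unchanged. For fixed ${\mathfrak a}$, the product $y\,y_\infty$ is determined modulo ${\mathcal E}_L$, and the splitting $y\in{\mathcal L}_1^\times$ (with $i_L(y)\in W_L$), $y_\infty\in{\mathcal L}^\times_\infty$ (with $i_L(y_\infty)=1$) is determined modulo ${\mathcal L}^\times_\infty=\ker i_L$; replacing $y\,y_\infty$ by $\varepsilon_L\,y\,y_\infty$ with $\varepsilon_L\in{\mathcal E}_L$ forces $i_L(\varepsilon_L)\in W_L$, whence $\varepsilon_L\in\mu(L)$ by Lemma \ref{leopoldt}. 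Thus $\xi_L$ is well-defined modulo $i_L(\mu(L))$, a fortiori modulo $W_K\,i_L(\mu(L))$, and $\Psi$ is plainly a homomorphism into $\{\xi_L\in W_L,\ \xi_L^{1-s}\in i_L(\mu(K))\}/W_K\,i_L(\mu(L))$ whose kernel is $D$.

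The step I expect to be the main obstacle is surjectivity: every admissible $\xi_L$ (satisfying $\xi_L^{1-s}\in i_L(\mu(K))$) must be hit by some ${\mathfrak a}\in N$. This is exactly the reciprocal argument carried out before the statement. Choosing $y\in{\mathcal L}_1^\times$ with $i_L(y)=\xi_L$ gives $\zeta_K=y^{1-s}y'_\infty$ with $y'_\infty\in{\mathcal L}^\times_\infty$; one checks $\No_{L/K}(y'_\infty)=\No_{L/K}(\zeta_K)=1$, so Hilbert's Theorem 90 applied in ${\mathcal L}^\times_\infty$ produces $y'_\infty=y_\infty^{1-s}$. Then $(y\,y_\infty)\in{\mathcal I}_L^G=({\mathcal I}_K)_L$, using that $L/K$ is $p$-ramified, so it descends to an ideal ${\mathfrak a}\in{\mathcal I}_K$ with $\Psi({\mathfrak a})=\xi_L$; here one also invokes ${\mathcal P}_{L,\infty}^G={\mathcal P}_{K,\infty}$. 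The delicacy lies in this descent — one must ensure that no ramification outside $p$ obstructs $(y\,y_\infty)$ from being the extension of an ideal of $K$ — which is exactly what the $p$-ramification hypothesis $S=S_p$ guarantees. Combining the four points yields the first isomorphism.

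For the final assertion, suppose $\mu(K)=1$. Then $\mu(L)=1$ by Lemma \ref{zeta}\,(i), so $i_L(\mu(L))=1$; the admissibility condition $\xi_L^{1-s}\in i_L(\mu(K))=1$ becomes $\xi_L^{1-s}=1$, and therefore $\{\xi_L\in W_L,\ \xi_L^{1-s}=1\}=W_L^G=W_K$ by Lemma \ref{zeta}\,(iii). As a consistency check the isomorphism then collapses to $W_K/W_K=1$, recovering the injectivity of Theorem \ref{trivial}.
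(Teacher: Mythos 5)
Your proposal is correct and follows essentially the same route as the paper: the paper's proof consists precisely of considering the map ${\mathfrak a}\mapsto \xi_L$ (defined modulo $\mu(L)$) and letting the computations of \S\,\ref{ker} supply well-definedness, surjectivity and the identification of the kernel with $W_K\,i_L(\mu(L))$. Your write-up simply makes explicit the verifications (invariance under ${\mathcal P}_{K,\infty}$, the use of Lemme \ref{leopoldt} to get $\varepsilon_L\in\mu(L)$, Hilbert 90 in ${\mathcal L}^\times_\infty$ and the $p$-ramification hypothesis for the descent) that the paper leaves implicit.
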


\begin{proof} Il suffit de consid\'erer l'application qui \`a 
${\mathfrak a}$, dont la classe est dans noyau de capitulation, associe
$\xi_L$ (qui est d\'efinie modulo $\mu(L)$).
\end{proof}

Si $\mu(K)=1$, le lemme redonne le Th\'eor\`eme \ref{trivial} sur la nullit\'e du noyau de capitulation.

\subsection{Etude du noyau de capitulation dans le cas kummerien}

On suppose d\'esor\-mais que $\zeta_1$ d'ordre $p$ est dans $K$. On a, \`a partir du Lemme \ref{cns},
la suite exacte~:

\smallskip
\centerline{$1 \to {\rm Ker}(j_{L/K}) \too W_L/W_K\, i_L(\mu(L))  \ds 
\mathop{\tooo}^{1-s}  W_L^{1-s}/ W_L^{1-s} \cap  i_L(\mu(K))\to 1$,}

\medskip\noindent
qui conduit, puisque  $W_L^{1-s} \simeq W_L/W_K$ et $W_K \,i_L(\mu(L)) / W_K \simeq \mu(L) /\mu(K)$ \`a~:

\begin{lemma}\label{kummer}
Dans le cas kummerien $p$-ramifi\'e cyclique de degr\'e $p$, et sous la conjecture 
de Leopoldt pour $L$ et $p$, on a $\vert  {\rm Ker}(j_{L/K}) \vert = 
\vert  W_L^{1-s} \cap  i_L(\mu(K)) \vert \times \Frac{\vert \mu(K) \vert}{\vert \mu(L) \vert}$, o\`u 
$\vert  W_L^{1-s} \cap  i_L(\mu(K)) \vert$  est d'ordre $1$ ou $p$.

\smallskip\noindent
Si $\mu(K) = \mu(L) \ne 1$, on a 
$\vert  {\rm Ker}(j_{L/K}) \vert = \vert  W_L^{1-s} \cap  i_L(\mu(K)) 
\vert$.

\smallskip\noindent
Si $\mu(K) = \mu(L)^p \ne 1$ (i.e. $L/K$ est 
cyclotomique globale), $j_{L/K}$ est injectif. 
\end{lemma}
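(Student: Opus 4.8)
The plan is to extract $|{\rm Ker}(j_{L/K})|$ directly from the exact sequence displayed just above the statement, and then to evaluate its two outer terms with the structural isomorphisms already recorded. Under the Leopoldt conjecture for $L$ all three groups in
$$1 \to {\rm Ker}(j_{L/K}) \too W_L/W_K\, i_L(\mu(L)) \ds\mathop{\tooo}^{1-s} W_L^{1-s}/W_L^{1-s}\cap i_L(\mu(K)) \to 1$$
are finite, so multiplicativity of orders gives
$$|{\rm Ker}(j_{L/K})| = |W_L/W_K\, i_L(\mu(L))| \cdot |W_L^{1-s}/W_L^{1-s}\cap i_L(\mu(K))|^{-1}.$$

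First I would substitute the two isomorphisms. From $W_L^{1-s}\simeq W_L/W_K$ one reads $|W_L^{1-s}| = [W_L:W_K]$, whence the second factor equals $[W_L:W_K]\,|W_L^{1-s}\cap i_L(\mu(K))|^{-1}$; from $W_K\,i_L(\mu(L))/W_K\simeq \mu(L)/\mu(K)$ one reads $[W_K\,i_L(\mu(L)):W_K] = [\mu(L):\mu(K)]$, whence the first factor equals $|W_L|\,(|W_K|\,[\mu(L):\mu(K)])^{-1}$. Dividing and cancelling $[W_L:W_K]$ leaves exactly
$$|{\rm Ker}(j_{L/K})| = |W_L^{1-s}\cap i_L(\mu(K))| \times \Frac{|\mu(K)|}{|\mu(L)|},$$
which is the asserted formula.

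Next I would pin down the intersection. Every element of $W_L^{1-s}\cap i_L(\mu(K))$ has the shape $\xi_L^{1-s} = i_L(\zeta)$ with $\zeta \in \mu(K)\subseteq\mu(L)$; by Lemma \ref{zeta}(iii) such a $\zeta$ has order $1$ or $p$, and since $i_L$ is injective on roots of unity this confines the intersection to the $p$-torsion of the cyclic group $i_L(\mu(K))$, so its order is $1$ or $p$. The case $\mu(K)=\mu(L)\ne 1$ is then immediate: the factor $|\mu(K)|/|\mu(L)|$ equals $1$ and the formula collapses to $|{\rm Ker}(j_{L/K})| = |W_L^{1-s}\cap i_L(\mu(K))|$.

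The delicate point, which I expect to be the main obstacle, is the globally cyclotomic case $\mu(K)=\mu(L)^p\ne 1$: here the prefactor $|\mu(K)|/|\mu(L)|$ equals $1/p$, so the formula can only be consistent---and will then yield injectivity---if the intersection has order exactly $p$, not merely at most $p$. For this I would invoke Lemma \ref{zeta}(ii), which gives $\mu(L)^{1-s}=\langle\zeta_1\rangle$ with $\zeta_1$ of order $p$ and $\zeta_1\in\mu(K)$. Applying the injective, $G$-equivariant map $i_L$ yields $i_L(\mu(L))^{1-s} = i_L(\langle\zeta_1\rangle)\subseteq W_L^{1-s}\cap i_L(\mu(K))$, a subgroup of order $p$; combined with the upper bound this forces $|W_L^{1-s}\cap i_L(\mu(K))| = p$, so that $|{\rm Ker}(j_{L/K})| = p\times\frac1p = 1$ and $j_{L/K}$ is injective.
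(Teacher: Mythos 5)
Your proof is correct and follows essentially the same route as the paper: you extract the order formula from the displayed exact sequence via the two isomorphisms $W_L^{1-s}\simeq W_L/W_K$ and $W_K\,i_L(\mu(L))/W_K\simeq \mu(L)/\mu(K)$, then bound the intersection by $\langle i_L(\zeta_1)\rangle$ using Lemme \ref{zeta}, exactly as the paper does. Your explicit verification that the intersection has order exactly $p$ in the globally cyclotomic case (via $\mu(L)^{1-s}=\langle \zeta_1\rangle$) is a detail the paper leaves implicit (there injectivity already follows from the upper bound, since the kernel order is an integer), but it completes the same argument rather than departing from it.
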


\begin{proof}
En effet, d'apr\`es le Lemme \ref{zeta} on a $W_L^{1-s} \cap  i_L(\mu(K)) \subseteq \langle i_L(\zeta_1) \rangle$ 
qui est d'ordre 1 ou~$p$.
Ainsi il y a injectivit\'e de $j_{L/K}$ si et seulement si $L/K$ est cyclotomique globale ou bien si
$\mu(K) = \mu(L) \ne 1$ et $i_L(\zeta_1) \notin W_L^{1-s}$.
\end{proof}

Ceci peut se caract\'eriser de la fa\c con concr\`ete suivante~:

\begin{theorem}\label{cns2} Soit $L/K$ une extension cyclique, $p$-ramifi\'ee, de degr\'e $p>2$, 
v\'erifiant la conjecture de Leopoldt pour $p$.

\smallskip\noindent
Lorsque $\mu(K) = \mu(L) \ne 1$ (i.e., $L/K$ kum\-merienne non globalement cyclotomique), 
le morphisme de capitulation $j_{L/K} : {\mathcal B\mathcal P}_K \too {\mathcal B\mathcal P}_L$ 
est injectif si et seulement si il existe $v_0 \div p$ dans $K$, non d\'ecompos\'ee dans $L$,  telle que 
$\mu(L_{w_0}) = \mu(K_{v_0})$ pour l'unique $w_0 \div v_0$
(i.e., $L/K$ est, en $v_0$, localement de degr\'e $p$ et non localement cyclotomique).\,\footnote{Il r\'esulte de ceci que ``$L/K$
$p$-ramifi\'ee  \&  localement cyclotomique en $p$'' est \'equivalente \`a ``$L/K$ partout localement cyclotomique'' puisque 
$L_w/K_v$ est, pour $v\notdiv p$, le rel\`evement $v$-adique de l'extension r\'esiduelle $F_w/F_v$, 
qui est cyclotomique (que $v$ soit d\'ecompos\'ee ou inerte). Dans \cite{Ja6} et \cite{Ng2} c'est le vocabulaire adopt\'e
syst\'ematiquement. }
\end{theorem}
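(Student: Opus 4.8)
The plan is to start from the exact computation already in Lemma~\ref{kummer}. In the case $\mu(K)=\mu(L)\neq 1$, that lemma together with its proof reduces injectivity of $j_{L/K}$ to the single assertion
$$i_L(\zeta_1)\notin W_L^{1-s},$$
where $\zeta_1\in\mu(K)$ has order $p$. So the theorem becomes a purely local statement about whether the ``diagonal'' element $i_L(\zeta_1)$ lies in the image of $1-s$ on $W_L$. The structural remark I would use is that $W_L=\bigoplus_{v\div p}W_L^{(v)}$ with $W_L^{(v)}:=\bigoplus_{w\div v}\mu(L_w)$, and that this is a decomposition as $\Z_p[G]$-modules, since $G$ permutes transitively the places $w\div v$ above a fixed $v$ and stabilizes each block. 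Hence $W_L^{1-s}=\bigoplus_{v\div p}(1-s)W_L^{(v)}$, the $v$-component of $i_L(\zeta_1)$ is the diagonal $(\zeta_1,\dots,\zeta_1)\in W_L^{(v)}$, and membership can be tested place by place: $i_L(\zeta_1)\in W_L^{1-s}$ iff for every $v\div p$ this diagonal lies in $(1-s)W_L^{(v)}$.

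First I would treat the \emph{split} places. If $v$ is d\'ecompos\'ee in $L$, the $p$ places $w\div v$ are cyclically permuted by $s$ and each $L_w\simeq K_v$, so $W_L^{(v)}$ is an induced $G$-module. The diagonal $(\zeta_1,\dots,\zeta_1)$ is $G$-invariant; writing $s$ as the cyclic shift of the components, one solves $(1-s)(m)=(\zeta_1,\dots,\zeta_1)$ explicitly by $m=(1,\zeta_1,\zeta_1^2,\dots,\zeta_1^{p-1})$, the only consistency condition being $\zeta_1^{p}=1$, which holds. Thus at a split $v$ the diagonal $\zeta_1$ always belongs to $(1-s)W_L^{(v)}$, and split places never obstruct membership.

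Next I would treat the \emph{non-split} places, where there is a unique $w_0\div v$, the decomposition group is all of $G$, and $s$ acts on the cyclic group $\mu(L_{w_0})$. Here the dichotomy of the theorem appears. If $\mu(L_{w_0})=\mu(K_{v})$, then $s$ fixes $\mu(L_{w_0})$ pointwise (it fixes $K_v$), so $(1-s)W_L^{(v)}=1$ and the nontrivial diagonal $\zeta_1$ is \emph{not} in the image. If instead $\mu(L_{w_0})\neq\mu(K_{v})$, then since $[L_{w_0}:K_v]=p$ one necessarily has $L_{w_0}=K_v(\zeta)$ with $\zeta$ a generator of $\mu(L_{w_0})$ of order $p\cdot\vert\mu(K_v)\vert$, so $L_{w_0}/K_v$ is the local cyclotomic step and $s$ acts faithfully on $\mu(L_{w_0})$; writing $\vert\mu(K_v)\vert=p^{a}$ and $s(\zeta)=\zeta^{1+cp^{a}}$ with $c\not\equiv 0\pmod p$, one gets $(1-s)\mu(L_{w_0})=\langle\zeta^{p^{a}}\rangle$, which is exactly the local component $\langle\zeta_1\rangle$ of $i_L(\mu(K))$ at $w_0$, so the diagonal $\zeta_1$ \emph{is} in the image. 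Combining the three cases, $i_L(\zeta_1)\in W_L^{1-s}$ precisely when every non-split $v\div p$ is locally cyclotomic; equivalently $i_L(\zeta_1)\notin W_L^{1-s}$, i.e. $j_{L/K}$ is injective, exactly when there exists a non-split $v_0\div p$ with $\mu(L_{w_0})=\mu(K_{v_0})$, which is the claim.

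I expect the only real obstacle to be the non-split local analysis: one must justify that $\mu(L_{w_0})\supsetneq\mu(K_v)$ forces $L_{w_0}$ to be exactly the degree-$p$ cyclotomic extension $K_v(\zeta)$ (using $[K_v(\zeta)\!:\!K_v]=p$ and comparing degrees), and then compute the image of $1-s$ on $\mu(L_{w_0})$ through the faithful Galois action. Everything else --- the $\Z_p[G]$-decomposition of $W_L$, the triviality of the split contribution, and the assembly of the equivalence --- is routine once the reduction of Lemma~\ref{kummer} is in hand. In particular the footnote's equivalence with ``partout localement cyclotomique'' follows, since at $v\notdiv p$ the extension $L_w/K_v$ is the unramified lift of a residue extension and hence automatically cyclotomic.
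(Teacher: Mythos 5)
Your proposal is correct and follows essentially the same route as the paper: reduce via Lemma~\ref{kummer} to deciding whether $i_L(\zeta_1)\in W_L^{1-s}$, decompose $W_L=\bigoplus_{v\div p}W_{L,v}$ as $G$-modules, and settle the three cases (split; non-split locally cyclotomic; non-split non-cyclotomic) place by place. The only difference is cosmetic: where the paper reads off $W_{L,v}^{1-s}$ from ${\rm H}^1(G,W_{L,v})$ (Shapiro's lemma in the split case), you compute the image of $1-s$ by explicit elements, which is an equivalent calculation.
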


\begin{proof} Ecrivons $W_L = \bigoplus_{v \div p} W_{L, v}$ en un sens \'evident. 
On a alors trois cas selon que $v$ est d\'ecompos\'ee dans $L/K$ ou non,
puis que $L_w/K_v$ est localement cyclotomique ou non~; on calcule alors 
${\rm H}^1(G, W_{L, v})$ (ou ${\rm H}^2(G, W_{L, v})$ plus facile) pour obtenir $\vert  W_{L, v}^{1-s}\vert$~:

\smallskip
(i) Cas d\'ecompos\'e, i.e.,  $W_{L, v} = \bigoplus_{w \div v} 
\mu(K_v)$~:  on a ${\rm H}^1(G, W_{L, v}) = 0$ (Lemme 
de Shapiro), d'o\`u $W_{L, v}^{1-s} = 
{}_N W_{L,v}$ qui contient l'image de $\langle \zeta_1 \rangle$.

\smallskip
(ii) Cas non d\'ecompos\'e et $\mu(L_w)^p =\mu(K_v)$~: on a  ${\rm H}^1(G, W_{L, v}) =0$
et $W_{L, v}^{1-s} = \langle i_v(\zeta_1) \rangle$.

\smallskip
(iii)  Cas non d\'ecompos\'e et $\mu(L_w) =\mu(K_v)$~:  on a ${\rm H}^1(G, W_{L, v}) \simeq \Z/p\Z$
et $\vert  W_{L, v}^{1-s}\vert = 1$.

\medskip
Le th\'eor\`eme en r\'esulte puisque la condition contraire $i_L(\zeta_1) \in W_L^{1-s}$est \'equivalente 
\`a la non nullit\'e de tous les $W_{L, v}^{1-s}$. Seul  (iii) conduit \`a $\vert  W_L^{1-s} \cap i_L(\mu(K)) \vert = 1$.
\end{proof}

\section{Interpr\'etation via un radical kummerien}
On suppose que $\mu(K)\ne 1$ et on pose $L=K(\sqrt[p] \alpha)$, 
$\alpha \in K^\times$~; on suppose que $L/K$ est $p$-ramifi\'ee et v\'erifie la conjecture de Leopoldt pour $p$. 
On a $(\alpha)={\mathfrak a}_0^p\, {\mathfrak a}_{p}$, o\`u ${\mathfrak a}_0$ est un id\'eal de $K$ \'etranger \`a $p$
et o\`u ${\mathfrak a}_{p}$ est un produit d'id\'eaux premiers au-dessus de $p$ \`a une puissance $< p$.
On pose $(\sqrt[p] \alpha)^{1-s} = \zeta_1$ d'ordre~$p$.

\smallskip
On suppose, dans cette \'etude, que $L/K$ n'est pas (globalement) cyclotomique
(on est dans le cas $\mu(L)=\mu(K) \ne 1$).
D'apr\`es le Th\'eor\`eme \ref{cns2}, $j_{L/K}$ est {\it non injectif} 
si et seulement si pour toute place $v \div p$, non d\'ecompos\'ee 
dans $L/K$, $\mu(K_v) = \mu(L_w)^p$ pour l'unique place $w$ de $L$ au-dessus de $v$.
Dans ce cas, il existe donc ${\mathfrak a} \in {\mathcal I}_K$,
\'etranger \`a $p$, dont la classe est dans le noyau de $j_{L/K}$ et 
est {\it non triviale}~; on a $({\mathfrak a})_L=(y \, y_\infty)$, avec $i_L(y)=\xi_L$, 
$i_L(y_\infty)=1$, et (par analogie avec les calculs du \S\,\ref{ker})~:
$$i_L(y)^{1-s} =  \xi_L^{1-s}= i_L(\sqrt[p] \alpha)^{1-s} = i_L(\zeta_1). $$

Soit  $v \div p$ fix\'ee non d\'ecompos\'ee (le cas d\'ecompos\'e \'equivaut au fait que 
$\alpha$ est puissance $p$-i\`eme locale en $v$)~; on a, avec des 
notations \'evidentes, $i_w(y) = \xi_w$ et $i_w(y)^{1-s} =  \xi_w^{1-s}=i_w(\zeta_1)$ et par cons\'equent 
on peut prendre $\xi_w$ comme radical local, autrement dit, $L_w = K_v(\sqrt[p] {\xi_v})$ 
o\`u $\xi_w^p =: \xi_v \in \mu(K_v)$ et il en r\'esulte, par ``unicit\'e'' d'un radical, que ${\mathfrak a}_{p} = 1$.
D'o\`u $(\alpha)={\mathfrak a}_0^p$, ce qui s'\'ecrit, dans $L$, $(\sqrt[p] \alpha) = ({\mathfrak a}_0)_L$.

\smallskip
On a $i_L(y) = i_L(\sqrt[p] \alpha) \, a,\ \, a\in {\mathcal K}_1^\times$, 
$a$ \'etranger \`a $p$. Quitte \`a modifier $\alpha$ modulo ${\mathcal K}_1^{\times p}$ et 
${\mathfrak a}_0$ en cons\'equence, on peut supposer que 
$i_L(y) = i_L(\sqrt[p] \alpha) = \xi_L$ avec $\xi_L^{1-s}=i_L(\zeta_1)$, auquel cas
$({\mathfrak a})_L=({\mathfrak a}_ 0 )_L \, (y_\infty)$. 
Comme l'id\'eal $(y_\infty)$ est invariant par $G$, il est de la forme $(x_\infty)$, $x_\infty \in
{\mathcal K}_\infty^\times$ (\cite{Gr1}, Lemme IV.3.1). On a alors la relation
${\mathfrak a}= {\mathfrak a}_0 \pmod{ {\mathcal P}_{K, \infty}}$.

\medskip
On peut donc \'enoncer le r\'esultat suivant  (analogue aux formulations donn\'ees par \cite{Ja6} et \cite{Ng2}),
o\`u l'on rappelle que pour un corps $k$, $\mu(k)$ est le $p$-groupe des racines de l'unit\'e 
de $k$, que $\widetilde k$ est le compos\'e des $\Z_p$-extensions de $k$, que $U_k$ est le 
$\Z_p$-module d'unit\'es locales principales en $p$ de $k$ et que $W_k = {\rm tor}_{\Z_p}(U_K)$~; 
on d\'esigne par $BP_k$ la pro-$p$-extension Ab\'elienne maximale de $k$ compo\-s\'ee des 
$p$-extensions cycliques plongeables dans une $p$-extension cyclique de $k$ de 
degr\'e arbitrairement grand~; on pose ${\mathcal B\mathcal P}_k:={\rm Gal}(BP_k/ \widetilde k)$~:

\begin{theorem} \label{thmf}
Soit $L/K$ une extension cyclique de corps de nombres, de degr\'e premier $p>2$.
On suppose que $L$ v\'erifie la conjecture de Leopoldt pour $p$.

\smallskip\noindent
Alors le morphisme de capitulation $j_{L/K} :  {\mathcal B\mathcal P}_K \too {\mathcal B\mathcal P}_L$
est injectif sauf si les deux conditions suivantes sont satisfaites~:

\smallskip
(i) $\mu(L) =\mu(K) \ne 1$, $L = K(\sqrt[p] \alpha)$, $\alpha \in K^\times$ et 
$(\alpha)= {\mathfrak a}_0^p$, o\`u ${\mathfrak a}_0$ est un id\'eal de $K$ \'etranger \`a $p$ 
(i.e., $L/K$ est kummerienne, non globalement cyclotomique, et $p$-ramifi\'ee (i.e., non ramifi\'ee en dehors de $p$))~;

\smallskip
(ii) l'image diagonale de $\alpha$ dans $U_K$ est \'egale \`a $\xi_K \,u_K^p$, $\xi_K\in W_K$, $u_K\in U_K$,  
avec $\xi_K \in W_L^p$ (i.e., $L/K$ est localement cyclotomique en toute place $v \div p$).

\medskip\noindent
{\rm \bf English version.}
Let $L/K$ be a cyclic extension of number fields, of prime degree $p>2$.
We suppose that $L$ satisfies the Leopoldt conjecture for $p$.

\smallskip\noindent
Then the morphism of capitulation 
$j_{L/K} : {\mathcal B\mathcal P}_K \too {\mathcal B\mathcal P}_L$
is injective except if the two following conditions are satisfied~:

\smallskip
(i) $\mu(L) = \mu(K) \ne 1$, $L = K(\sqrt[p] \alpha)$, $\alpha \in K^\times$ and 
$(\alpha)={\mathfrak a}_0^p$, where ${\mathfrak a}_0$ is a prime to $p$ ideal of $K$ (i.e., $L/K$ is kummerian,
non globally cyclotomic, and $p$-ramified (i.e., unramified outside $p$))~;

\smallskip
(ii) the diagonal image of $\alpha$ into $U_K$ is equal to $\xi_K \,u_K^p$, $\xi_K\in W_K$, $u_K\in U_K$,  
with  $\xi_K \in W_L^p$ (i.e., $L/K$ is localy cyclotomic at each place $v \div p$). 
\end{theorem}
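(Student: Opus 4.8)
The plan is to assemble the statement from the partial results already established, treating it as a synthesis rather than a fresh computation. First I would dispose of every situation in which $j_{L/K}$ is forced to be injective, so that conditions (i) and (ii) emerge as exactly the remaining possibility. Theorem \ref{trivial} kills the case $\mu(K)=1$, and the remark following it shows that a non-trivial kernel forces $L/K$ to be $p$-ramified. Since $L/K$ is cyclic of degree $p$ with $\mu(K)\ne 1$, Lemma \ref{zeta} leaves only the two sub-cases $\mu(K)=\mu(L)^p\ne 1$ (globally cyclotomic) and $\mu(K)=\mu(L)\ne 1$; the former is injective by Lemma \ref{kummer}. Hence non-injectivity already forces the kummerian, non globally cyclotomic, $p$-ramified situation, and in particular $\mu(L)=\mu(K)\ne 1$, which is the first half of condition (i).

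Next I would feed this surviving case into Theorem \ref{cns2}. By Lemma \ref{kummer} the kernel has order $\vert W_L^{1-s}\cap i_L(\mu(K))\vert$, so non-injectivity is equivalent to $i_L(\zeta_1)\in W_L^{1-s}$. The place-by-place analysis of Theorem \ref{cns2} shows this holds if and only if no $v\div p$ falls in case (iii), that is, every non-decomposed $v\div p$ satisfies $\mu(L_w)^p=\mu(K_v)$ (locally cyclotomic), the decomposed places being harmless since there $W_{L,v}^{1-s}$ already contains $i_v(\zeta_1)$. This is precisely the local cyclotomy at $p$ that I must convert into the explicit form of condition (ii).

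The conversion is carried out through the radical interpretation of the preceding section. When the kernel is non-trivial one produces ${\mathfrak a}\in{\mathcal I}_K$ with $({\mathfrak a})_L=(y)(y_\infty)$, $i_L(y)=\xi_L$ and $\xi_L^{1-s}=i_L(\zeta_1)$; comparing $\xi_w$ with $\sqrt[p]\alpha$ at each non-decomposed $v\div p$ forces the $p$-part ${\mathfrak a}_p$ of $(\alpha)$ to be trivial, giving $(\alpha)={\mathfrak a}_0^p$ and completing (i). For condition (ii) I would read the local cyclotomy through the diagonal embedding: at a decomposed place $\alpha$ is a local $p$-th power (take $\xi_v=1$), while at a non-decomposed locally cyclotomic place $L_w=K_v(\sqrt[p]{\xi_v})$ with $\xi_v\in\mu(K_v)=\mu(L_w)^p$, so the $v$-component of $\alpha$ is $\xi_v u_v^p$ with $\xi_v\in W_{L,v}^p$. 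Collecting components yields $\alpha=\xi_K u_K^p$ in $U_K$ with $\xi_K\in W_L^p$ (hence $\xi_K\in W_K$ by Lemma \ref{zeta}), which is (ii); conversely this factorization places every $v\div p$ in case (i) or (ii) of Theorem \ref{cns2}, hence $i_L(\zeta_1)\in W_L^{1-s}$ and non-injectivity.

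The main obstacle I anticipate is precisely this last local-global dictionary: matching the abstract vanishing condition $i_L(\zeta_1)\in W_L^{1-s}$ (equivalently the non-appearance of case (iii)) with the concrete statement that the diagonal image of $\alpha$ is $\xi_K u_K^p$ with $\xi_K\in W_L^p$. It requires the local Kummer description $L_w=K_v(\sqrt[p]{\xi_v})$ at each $v\div p$, the identification of $\mu(L_w)^p=\mu(K_v)$ with membership in $W_{L,v}^p$, and a careful normalization of $\alpha$ modulo ${\mathcal K}_1^{\times p}$ so that simultaneously $(\alpha)={\mathfrak a}_0^p$ and $i_L(\sqrt[p]\alpha)=\xi_L$. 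Once this bookkeeping is in place the theorem follows by combining it with the reductions above.
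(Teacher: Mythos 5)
Your proposal is correct and follows essentially the same route as the paper: Théorème \ref{thmf} is stated there as a synthesis of Théorème \ref{trivial}, Lemme \ref{kummer}, Théorème \ref{cns2} and the section ``Interprétation via un radical kummerien'', which is exactly the chain of reductions you describe, including the normalization of $\alpha$ modulo ${\mathcal K}_1^{\times p}$ and the conclusion ${\mathfrak a}_p=1$ giving $(\alpha)={\mathfrak a}_0^p$. The local--global dictionary you flag as the main obstacle is precisely what that section carries out, so no new idea is missing.
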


On pourrait alors obtenir le r\'esultat plus g\'en\'eral de Jaulent (\cite{Ja6}, Corollaire 11)~:

\begin{theorem} 
Dans une $p$-extension $L/K$ de corps de nombres, contenant les racines $p$-i\`emes de l'unit\'e,
et satis\-faisant \`a la conjecture de Leopoldt pour le nombre premier $p>2$, le sous-groupe ${\rm Ker}(j_{L/K})$ 
des \'el\'ements de ${\mathcal B\mathcal P}_K$ qui capitulent dans ${\mathcal B\mathcal P}_L$ est d'ordre 
${\rm min} \,\{\vert \mu(K) \vert, [L\cap K^{\rm lc} : K(\mu(L))]\}$, o\`u $K^{\rm lc}$ 
est la pro-$p$-extension localement cyclotomique maximale de $K$.
\end{theorem}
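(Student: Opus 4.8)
The plan is to reduce to the cyclic degree~$p$ case (Theorem~\ref{thmf}) and then induct, the minimum reflecting a competition between the cyclic group $\mu(K)$ and the locally cyclotomic part of $L$. Write $\vert\mu(K)\vert=p^{a}$, and for $K\subseteq k\subseteq L$ set $k_0:=K(\mu(k))$ and define $b_k\ge 0$ by $[\,k\cap K^{\rm lc}:k_0]=p^{\,b_k}$ (here $k_0\subseteq k\cap K^{\rm lc}$, a globally cyclotomic extension being everywhere locally cyclotomic, and $\mu(k)=\mu(k\cap K^{\rm lc})$); the target value is $p^{\min(a,\,b_L)}$. The first step is to replace $L$ by $F:=L\cap K^{\rm lc}$. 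Indeed every cyclic degree~$p$ subextension $F\subseteq M'\subseteq L$ is \emph{non} locally cyclotomic over $F$ (otherwise $M'$ would be locally cyclotomic over $K$, i.e. $M'\subseteq L\cap K^{\rm lc}=F$), so $j_{M'/F}$ is injective by Theorem~\ref{thmf}; running this along a tower from $F$ to $L$ shows $j_{L/F}$ injective, whence ${\rm Ker}(j_{L/K})={\rm Ker}(j_{F/K})$ and one may assume $L=F\subseteq K^{\rm lc}$.

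Now induct on $[L:K]$, choosing $K\subseteq M\subset L$ with $[L:M]=p$; then $M\subseteq K^{\rm lc}$, $\zeta_1\in M$, $M$ satisfies Leopoldt, and $L/M$ is cyclic of degree~$p$ and locally cyclotomic. Transitivity $j_{L/K}=j_{L/M}\circ j_{M/K}$ gives
\[
1\to{\rm Ker}(j_{M/K})\to{\rm Ker}(j_{L/K})\ \xrightarrow{\ j_{M/K}\ }\ {\rm Ker}(j_{L/M})\cap{\rm Im}(j_{M/K})\to1 ,
\]
so $\vert{\rm Ker}(j_{L/K})\vert=p^{\min(a,\,b_M)}\cdot\varepsilon$ with $\varepsilon=\vert{\rm Ker}(j_{L/M})\cap{\rm Im}(j_{M/K})\vert$, by the induction hypothesis for $M/K$. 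Theorem~\ref{thmf} leaves two cases for the locally cyclotomic step $L/M$. If $L/M$ is globally cyclotomic then ${\rm Ker}(j_{L/M})=1$, so $\varepsilon=1$; here $\mu(L)\supsetneq\mu(M)$ of index $p$ and one computes $[L:K(\mu(L))]=[M:K(\mu(M))]$, i.e. $b_L=b_M$, so $\min(a,b_L)=\min(a,b_M)$ as wanted. If $L/M$ is not globally cyclotomic then $\mu(L)=\mu(M)$, $b_L=b_M+1$, and Theorem~\ref{thmf} gives ${\rm Ker}(j_{L/M})\simeq\Z/p\Z$; comparing $p^{\min(a,b_M)}\cdot\varepsilon$ with the desired $p^{\min(a,b_M+1)}$, the whole theorem comes down to the \emph{budget claim}: in this last case $\varepsilon=p$ if and only if $b_M<a$; equivalently, the order~$p$ group ${\rm Ker}(j_{L/M})$ lies in ${\rm Im}(j_{M/K})$ exactly while the root of unity budget $\vert\mu(K)\vert$ is not yet exhausted by the locally cyclotomic height $b_M$ already climbed.

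Establishing the budget claim is the main obstacle. By Lemma~\ref{cns} the nontrivial class of ${\rm Ker}(j_{L/M})$ is represented by an ideal $\mathfrak b_0$ of $M$ with $L=M(\sqrt[p]{\beta})$, $(\beta)=\mathfrak b_0^{\,p}$, whose capitulation is detected, for a generator $s$ of ${\rm Gal}(L/M)$, by the invariant $\xi_L\mapsto\xi_L^{1-s}=i_L(\zeta_1)\in i_L(\mu(M))$; the issue is whether $[\mathfrak b_0]\in j_{M/K}({\mathcal B\mathcal P}_K)$. The clean way to see the budget is the cohomological form of Lemma~\ref{cns}: using $W_L^{G}=W_K$ (Lemma~\ref{zeta}) and Hilbert~$90$ for ${\mathcal L}^\times_\infty$, the assignment $\xi_L\mapsto(\sigma\mapsto\xi_L^{\sigma-1})$ realises ${\rm Ker}(j_{L/K})$ inside ${\rm Hom}(G,\mu(K))$, cut out by local coboundary conditions in the $W_{L,v}$ at $v\div p$ together with the descent of the resulting principal ideal relation to $K$. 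For cyclic $G$ Hilbert~$90$ makes the descent automatic (this is Lemma~\ref{cns}); in the inductive step it says precisely that the $\mu(M)$-valued datum $\zeta_1$ attached to $[\mathfrak b_0]$ can be absorbed into a generator of $\mu(K)\simeq\Z/p^{a}\Z$ — possible exactly when that generator still has unused depth, i.e. when $b_M<a$. Once $b_M=a$ the cyclic group $\mu(K)$ is saturated and the new capitulating class is defined over $M$ but not over $K$, giving $\varepsilon=1$.

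Granting the budget claim, the two cases give the recursion $p^{\min(a,b_L)}=p^{\min(a,b_M)}\cdot\varepsilon$ whose solution is $p^{\min(a,\,b_L)}$, the base case $L=K$ ($b_K=0$) being trivial; moreover each effective step raises the kernel by a single cyclic rung tied to one further $p$-th root of $\zeta_1$, so ${\rm Ker}(j_{L/K})$ is cyclic, generated by a class obtained on extracting a $p^{\min(a,b_L)}$-th root, which is the structural reason for the minimum. The delicate input remains the descent in the budget claim: one must show that, the local cyclotomicity at the places above $p$ being granted, the sole remaining global obstruction to pushing a capitulating class down to $K$ is the cyclic module $\mu(K)$ itself — a reciprocity/genus-theoretic statement, and the natural place where the hypotheses $p>2$ and Leopoldt enter.
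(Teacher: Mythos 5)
First, a point of comparison: the paper does not actually prove this statement. It is quoted from Jaulent (\cite{Ja6}, Corollaire 11), introduced only by the remark that one \emph{could} obtain it from the preceding results; so there is no argument in the text to match yours against, and your proposal has to stand on its own. Its global shape --- replace $L$ by $F=L\cap K^{\rm lc}$, then climb from $K$ to $F$ by degree-$p$ steps, feeding Theorem \ref{thmf} into the exact sequence $1\to{\rm Ker}(j_{M/K})\to{\rm Ker}(j_{L/K})\to{\rm Ker}(j_{L/M})\cap{\rm Im}(j_{M/K})\to 1$ --- is the natural one, and your bookkeeping of the two kinds of steps (globally cyclotomic versus locally-but-not-globally cyclotomic) is correct, including the verification that $b_L=b_M$ in the first case and $b_L=b_M+1$ in the second. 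But two steps are asserted rather than proved, and one of them is the whole theorem.

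The lesser gap is the reduction to $F$: your argument that a degree-$p$ step above $F$ is never locally cyclotomic works only for the \emph{first} floor of a tower. For an intermediate field $F_i$ with $F\subsetneq F_i\subseteq L$ one has $F_i\not\subseteq K^{\rm lc}$, so $F_i^{\rm lc}$ may strictly contain $F_iK^{\rm lc}$, and a step $F_{i+1}/F_i$ can be locally cyclotomic over $F_i$ without being so over $K$; hence the injectivity of $j_{L/F}$ (or at least the equality ${\rm Ker}(j_{L/K})={\rm Ker}(j_{F/K})$) is not established. The decisive gap is the \emph{budget claim} $\varepsilon=p\iff b_M<a$: this is precisely the content of the theorem beyond the cyclic degree-$p$ case, and what you offer for it is a metaphor (a generator of $\mu(K)$ with ``unused depth'', the group becoming ``saturated''), not an argument. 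Theorem \ref{thmf} and Lemma \ref{cns} tell you that ${\rm Ker}(j_{L/M})$ has order $p$ and exhibit a generator via an ideal ${\mathfrak b}_0$ of $M$; they say nothing about whether that class lies in ${\rm Im}(j_{M/K})$. Deciding this requires an explicit description of the image of $j_{M/K}$ inside ${\mathcal B\mathcal P}_M$, or equivalently a norm/descent computation tying ${\mathfrak b}_0$ to a $p^{b_M+1}$-th root of unity over $K$, and no such computation is carried out; nothing you cite from the paper controls this intersection. (A minor further point: in the induction you should take ${\rm Gal}(L/M)$ central in ${\rm Gal}(L/K)$ so that $M/K$ is again a $p$-extension in the sense of the statement.) As it stands the proposal is a plausible plan with the hard part left open.
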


\section{Exemples num\'eriques}
 On prend pour $K$ un corps biquadratique contenant le corps $\Q(j)$ 
des racines cubiques de l'unit\'e et on consid\`ere $p=3$  (voir des exemples analogues dans \cite{Ja6}, \S\,3)~:

\smallskip
(i) Soient $K= \Q(\sqrt{-23}, j)$ et l'unit\'e $\alpha =j\, \frac{ 25+3\sqrt {69}}{2}$.
On consid\`ere $L = K(\sqrt[3]\alpha)$~; on a bien $\mu(L) = \mu(K) \ne 1$.
Comme $3$ est d\'ecompos\'e dans $K$ en deux id\'eaux premiers 
${\mathfrak p}, \,{\mathfrak p}'$, les deux compl\'et\'es de $K$ sont \'egaux \`a $\Q_3(j)$.

\smallskip
On a $\alpha . j^{-1}= \frac{ 25+3\sqrt {69}}{2} \equiv -1 \pmod {({\mathfrak p}{\mathfrak p}' )^3}$, 
ce qui montre que les deux compl\'et\'es de $L$ sont \'egaux \`a $\Q_3 (\omega )$, o\`u
$\omega$ est une racine de l'unit\'e d'ordre 9~; on est dans le cas 
$\mu(K_v)=\mu(L_w)^p$ pour toute place $v \div p$, et les conditions du Th\'eor\`eme 
\ref{thmf} sont donc satisfaites puisque
$i_K(\alpha)=i_K( j) \,.\,  i_K \big(-1+\frac{3 \sqrt {69}}{2} \big)  = \xi_K \,.\, u_K^3$, avec
$\xi_K = (j, j) = (\omega, \omega)^3$, le nombre $-1 +\frac{3\sqrt {69}}{2}$ \'etant un cube dans $U_K$~; donc
 $j_{L/K}$ est non injectif.

\smallskip
(ii) On se place \`a nouveau dans $K= \Q(\sqrt{-23}, j)$ et on consid\`ere l'unit\'e 
$\alpha = \frac{25+3\sqrt {69}}{2}$ et  $L = K(\sqrt[3]\alpha)$. Comme $\alpha$ est localement un cube, 
$L/K$ est d\'ecompos\'ee en $3$ et on est dans le cas $\mu(K_v)=\mu(L_w)$. Donc $j_{L/K}$ est injectif.

\section{Points fixes du module de Bertrandias--Payan}

Dans cette section, $L/K$ est une $p$-extension quelconque de corps de nombres, de groupe de 
Galois $G$, que l'on supposera cyclique de degr\'e $p$ \`a partir du \S\,\ref{cycl}. 
On omet le plus souvent les plongements diagonaux $i_K$ et $i_L$.
Ici il n'y a aucune hypoth\`ese sur la ramification dans $L/K$.

On a la suite exacte qui d\'efinit ${\mathcal B\mathcal P}_L$ \`a partir de
${\mathcal T}_L = {\rm Gal}(H_L^{\rm pr} / \widetilde L)$~:
$$1 \to W_L/\mu(L) \too {\mathcal T}_L \too {\mathcal B\mathcal P}_L 
\to 1, $$
 o\`u $W_L = \bigoplus_{v \div p} W_{L,v}$, avec $W_{L,v} =\bigoplus_{w \div v} \mu(L_w)$ 
pour toute place $v \div p$ de $K$.

\smallskip
On a de m\^eme la suite exacte 
$1 \to W_K/\mu(K) \too {\mathcal T}_K \too {\mathcal B\mathcal P}_K \to 1$.

\subsection{Suites exactes fondamentales}\label{cohomo}
On suppose que $L$ v\'erifie la conjecture de Leo\-poldt pour $p$, ce 
qui implique que ${\mathcal T}_L^G$ contient un sous-groupe isomorphe \`a ${\mathcal T}_K$ 
(injectivit\'e du morphisme de capitulation ${\mathcal T}_K \too {\mathcal T}_L$, Lemme \ref{injectif}).
On a la suite exacte~:
$$\hspace{1.9cm} 1 \to (W_L/\mu(L))^G \too {\mathcal T}_L^G \too 
{\mathcal B\mathcal P}_L^G \ds \mathop{\too}^{\theta}
{\rm H}^1(G, W_L/\mu(L)) , \hspace{1.9cm}  (1)\ \, $$

\medskip\noindent
et \`a partir de $1 \to \mu(L)  \too W_L \too W_L/\mu(L) \to 1$, on a 
la suite exacte~:
$$\hspace{1.4cm}  1 \to W_K/\mu(K)  \too (W_L/\mu(L))^G 
\mathop{\too}^{\psi}  {\rm H}^1(G, \mu(L))
\mathop{\too}^{\nu}  {\rm H}^1(G, W_L),\hspace{1.1cm}  (2)$$
avec ${\rm H}^1(G, W_L) =\bigoplus_{v \div p} {\rm H}^1(G, W_{L,v})$.

\medskip
Dans le cas $G$ cyclique d'ordre $p$ (engendr\'e par $s$) on aura les suites exactes~:
$$\hspace{1.1cm} 1 \to (W_L/\mu(L))^G \too {\mathcal T}_L^G \too 
{\mathcal B\mathcal P}_L^G \ds \mathop{\too}^{\theta}
 {}_N^{}(W_L/\mu(L)) \big / (W_L/\mu(L))^{1-s} , \hspace{1.0cm}  
(1')\ \, $$
$$\hspace{0.4cm}  1 \to W_K/\mu(K)  \too (W_L/\mu(L))^G 
\mathop{\too}^{\psi}  {}_N^{}\mu(L) \big /\mu(L)^{1-s} 
\mathop{\too}^{\nu} \hbox{$\bigoplus_{v \div p}\ $}  {}_N^{}W_{L,v}/ 
W_{L,v}^{1-s} .\hspace{0.4cm}  (2')\ \,$$

\begin{remark} Dans le cas $G$ cyclique d'ordre $p$, les noyaux de la ``norme'' $N = N_G$ sont ceux de la 
norme alg\'ebrique (ici $N_G = 1+s+\cdots +s^{p-1}$). 
Or on peut utiliser, par commodit\'e des calculs, la norme arithm\'etique $\No_{L/K}$ 
\`a condition que pour les objets $X_L$ et $X_K$, l'application naturelle $i_{L/K} : X_K \too X_L$ soit injective puisque
$i_{L/K} \circ \No_{L/K} = N_G$. 

\noindent
C'est le cas ici pour les objets du type $\mu(k)$, $W_k$, ${\mathcal W}_k = W_k/\mu(k)$, 
${\mathcal T}_k$, mais non n\'ecessaire\-ment pour ${\mathcal B\mathcal P}_k$.
\end{remark}

On souhaite voir, comme dans \cite{Seo3}, \cite{Seo4}, \cite{Seo6},
dans quels cas on a $\vert {\mathcal B\mathcal P}_L \vert  \geq \vert {\mathcal B\mathcal P}_K \vert$ 
par le fait que par exemple $\vert {\mathcal B\mathcal P}_L^G\vert \geq \vert {\mathcal B\mathcal P}_K \vert$. 
D'apr\`es (1) et (2) il vient~:
$$\hspace{1.1cm} \vert {\mathcal B\mathcal P}_L^G \vert = \frac{\vert 
{\mathcal T}_L^G\vert \,.\, \vert {\rm Im}(\theta)\vert}{\vert (W_L/\mu(L))^G\vert }=
\frac{\vert {\mathcal T}_L^G\vert \,.\,\vert {\rm Im}(\theta)\vert}{\vert W_K/\mu(K)\vert \,.\, 
\vert {\rm Im}(\psi)\vert }=\frac{\vert {\mathcal T}_L^G\vert }{\vert  {\mathcal T}_K\vert} \,.\,
\vert {\mathcal B\mathcal P}_K \vert \,.\, \frac{\vert {\rm Im}(\theta)\vert }{\vert {\rm Im}(\psi)\vert} . 
\hspace{0.4cm} (3)\  \ $$

On obtient donc, en utilisant la formule des points fixes (\cite{Gr1}, Th\'eor\`eme IV.3.3)~:

\begin{theorem}\label{ptsfixes} Soit $L/K$ une $p$-extension v\'erifiant la conjecture de Leopoldt pour $p$.
On a la formule suivante (o\`u le premier facteur est entier)~:
$$\vert {\mathcal B\mathcal P}_L^G \vert =\frac{ \prod_{{\mathfrak q}\,\notdiv\, p} e^{}_{\mathfrak q}}
{\Big( \sm_{{\mathfrak q}\,\notdiv\, p}\, \hbox{$\frac{1}{e^{}_{\mathfrak q}}$}  \Z_p {\rm Log}_{K}({\mathfrak q}) + 
\Z_p {\rm Log}_{K}(I_{K}) : \Z_p {\rm Log}_{K}(I_{K}) \Big)}
\,\cdot\, \vert {\mathcal B\mathcal P}_K \vert \, \cdot \, \frac{\vert {\rm Im}(\theta)\vert }{\vert {\rm Im}(\psi)\vert} , $$
o\`u $e^{}_{{\mathfrak q}}$ est l'indice de ramification de ${\mathfrak q}$ dans $L/K$, et
${\rm Log}_{K}  : I_{K}\to \bigoplus_{v\,|\,p} K_v /\Q_p {\rm log}_{K}(E_K)$, la fonction logarithme 
d\'efinie dans \cite{Gr1}, III.2.2 (voir aussi le \S\,\ref{rho}).
\end{theorem}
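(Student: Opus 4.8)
The plan is to read off $\vert{\mathcal B\mathcal P}_L^G\vert$ directly from the formula (3) already established above, in which the only factor not yet in closed form is the genus-type ratio $\vert{\mathcal T}_L^G\vert/\vert{\mathcal T}_K\vert$. Recall that combining the two fundamental exact sequences (1) and (2) with $1\to W_K/\mu(K)\to{\mathcal T}_K\to{\mathcal B\mathcal P}_K\to 1$ yields
$$\vert{\mathcal B\mathcal P}_L^G\vert = \frac{\vert{\mathcal T}_L^G\vert}{\vert{\mathcal T}_K\vert}\cdot \vert{\mathcal B\mathcal P}_K\vert\cdot \frac{\vert{\rm Im}(\theta)\vert}{\vert{\rm Im}(\psi)\vert},$$
so the entire problem reduces to expressing $\vert{\mathcal T}_L^G\vert/\vert{\mathcal T}_K\vert$ in terms of ramification data.

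First I would invoke the fixed-point (genus) formula for the $p$-ramified torsion modules, namely \cite{Gr1}, Théorème IV.3.3. This is legitimate because $L/K$ is a $p$-extension satisfying the Leopoldt conjecture for $p$, so by Lemme \ref{injectif} the capitulation ${\mathcal T}_K\hookrightarrow{\mathcal T}_L$ is injective and $\vert{\mathcal T}_L^G\vert/\vert{\mathcal T}_K\vert$ is a genuine index. That formula evaluates the index purely in terms of ramification: its numerator is the product $\prod_{{\mathfrak q}\nmid p}e_{\mathfrak q}$ of the tame ramification indices, and its denominator is the index of $\Z_p{\rm Log}_K(I_K)$ inside the enlarged lattice $\sum_{{\mathfrak q}\nmid p}\frac{1}{e_{\mathfrak q}}\Z_p{\rm Log}_K({\mathfrak q}) + \Z_p{\rm Log}_K(I_K)$, the behaviour at the primes above $p$ being already absorbed into the logarithm function ${\rm Log}_K$. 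Substituting this expression into the displayed instance of (3) reproduces the stated formula verbatim.

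It then remains only to verify the parenthetical claim that the first factor is an integer. Here I would work inside the quotient
$$M := \Big(\textstyle\sum_{{\mathfrak q}\nmid p}\frac{1}{e_{\mathfrak q}}\Z_p{\rm Log}_K({\mathfrak q}) + \Z_p{\rm Log}_K(I_K)\Big)\big/\Z_p{\rm Log}_K(I_K).$$
Since each ${\mathfrak q}$ lies in $I_K$, one has ${\rm Log}_K({\mathfrak q})\in\Z_p{\rm Log}_K(I_K)$, so the class of $\frac{1}{e_{\mathfrak q}}{\rm Log}_K({\mathfrak q})$ in $M$ is annihilated by $e_{\mathfrak q}$. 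As these classes generate $M$, it is a quotient of $\bigoplus_{{\mathfrak q}\nmid p}\Z/e_{\mathfrak q}\Z$, whence $\vert M\vert$ divides $\prod_{{\mathfrak q}\nmid p}e_{\mathfrak q}$; this is exactly the integrality assertion.

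The substantive input is the fixed-point formula itself, so the main obstacle is not the bookkeeping above but the correct identification of its ingredients — in particular checking that the wild primes ${\mathfrak q}\mid p$ contribute nothing to the ratio $\vert{\mathcal T}_L^G\vert/\vert{\mathcal T}_K\vert$ beyond what ${\rm Log}_K$ already records, and that the tame contribution assembles precisely into the displayed ${\rm Log}$-index. Once Théorème IV.3.3 of \cite{Gr1} is granted in this form, the proof is the single substitution into (3) together with the elementary integrality check.
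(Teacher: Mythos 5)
Your argument is exactly the paper's: the theorem is obtained by substituting the fixed-point formula of \cite{Gr1}, Th\'eor\`eme IV.3.3, for the ratio $\vert{\mathcal T}_L^G\vert/\vert{\mathcal T}_K\vert$ appearing in the identity (3), which the paper has already derived from the exact sequences (1) and (2). Your additional verification that the first factor is an integer (via the $e_{\mathfrak q}$-torsion quotient $M$) is correct and in fact supplies a detail the paper only asserts parenthetically.
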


\begin{corollary}
On peut trouver une infinit\'e d'extensions $L/K$ telles que ${\mathcal B\mathcal P}_L^G$ 
soit non trivial et m\^eme telles que $\vert {\mathcal B\mathcal P}_L^G \vert =
\prd_{{\mathfrak q}\,\notdiv\, p} e^{}_{\mathfrak q} \,\cdot\, \vert {\mathcal B\mathcal P}_K \vert \, \cdot \, 
\Frac{\vert {\rm Im}(\theta)\vert }{\vert {\rm Im}(\psi)\vert}$.
\end{corollary}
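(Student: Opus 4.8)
The plan is to read the assertion directly off Theorem~\ref{ptsfixes}. In that formula the only quantity standing between $\vert {\mathcal B\mathcal P}_L^G\vert$ and the announced product is the index
$$D := \Big( \sm_{{\mathfrak q}\,\notdiv\, p}\, \frac{1}{e^{}_{\mathfrak q}}\, \Z_p {\rm Log}_{K}({\mathfrak q}) + \Z_p {\rm Log}_{K}(I_{K}) : \Z_p {\rm Log}_{K}(I_{K}) \Big),$$
which is always $\geq 1$ because the larger module visibly contains $\Z_p{\rm Log}_K(I_K)$. Thus the claimed equality $\vert {\mathcal B\mathcal P}_L^G\vert = \prd_{{\mathfrak q}\,\notdiv\,p} e^{}_{\mathfrak q}\,\cdot\,\vert {\mathcal B\mathcal P}_K\vert\,\cdot\,\Frac{\vert {\rm Im}(\theta)\vert}{\vert {\rm Im}(\psi)\vert}$ holds exactly when $D=1$. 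The whole task therefore reduces to producing infinitely many cyclic degree-$p$ extensions $L/K$ with $D=1$ and ${\mathcal B\mathcal P}_L^G$ non-trivial.

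First I would isolate a sufficient condition for $D=1$ prime by prime. Every ramified ${\mathfrak q}\,\notdiv\,p$ in a cyclic degree-$p$ extension is tame with $e^{}_{\mathfrak q}=p$, so one gets $D=1$ as soon as each contributing term already lies in the smaller lattice, i.e. $\frac1p{\rm Log}_K({\mathfrak q})\in\Z_p{\rm Log}_K(I_K)$, equivalently ${\rm Log}_K({\mathfrak q})\in p\,\Z_p{\rm Log}_K(I_K)$. Since ${\rm Log}_K({\mathfrak q})\in\Z_p{\rm Log}_K(I_K)$ automatically (as ${\mathfrak q}\in I_K$), and $\Z_p{\rm Log}_K(I_K)$ is a finitely generated $\Z_p$-module (it is the image under ${\rm Log}_K$ of the finitely generated module ${\mathcal A}_K$), this is a congruence in the finite $\F_p$-space $\Z_p{\rm Log}_K(I_K)/p\,\Z_p{\rm Log}_K(I_K)$, hence a splitting condition on ${\mathfrak q}$ in a fixed finite extension of $K$.

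Next I would invoke Chebotarev. Imposing simultaneously the logarithmic congruence above and $\No({\mathfrak q})\equiv 1\pmod p$ (needed so that a tame cyclic degree-$p$ ramification at ${\mathfrak q}$ exists, i.e. splitting of ${\mathfrak q}$ in $K(\zeta)$ with $\zeta$ a primitive $p$-th root of unity) amounts to prescribing the Frobenius of ${\mathfrak q}$ in one fixed finite extension of $K$; the corresponding set of primes has positive density, so it is infinite. For any finite family $\{{\mathfrak q}_1,\dots,{\mathfrak q}_r\}$ taken from it, the existence theorem of class field theory yields a cyclic degree-$p$ extension $L/K$ ramified exactly at these primes (choose an order-$p$ quotient of the $p$-part of the ray class group modulo $\prd_i{\mathfrak q}_i$ on which each inertia group acts non-trivially, and unramified at $p$). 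Distinct families give distinct $L$, which supplies the required infinitude, and for each such $L$ we have $D=1$ together with $\prd_{{\mathfrak q}\,\notdiv\,p}e^{}_{\mathfrak q}=p^{\,r}$.

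It remains to guarantee non-triviality. From the exact sequence $(2')$ the order $\vert{\rm Im}(\psi)\vert$ divides $\vert{\rm H}^1(G,\mu(L))\vert$, which by Lemma~\ref{zeta} equals $1$ or $p$; with $\vert{\rm Im}(\theta)\vert\geq 1$ this forces $\Frac{\vert{\rm Im}(\theta)\vert}{\vert{\rm Im}(\psi)\vert}\geq\frac1p$. Hence, when $D=1$,
$$\vert {\mathcal B\mathcal P}_L^G\vert = p^{\,r}\,\vert {\mathcal B\mathcal P}_K\vert\,\Frac{\vert{\rm Im}(\theta)\vert}{\vert{\rm Im}(\psi)\vert}\;\geq\; p^{\,r-1}\,\vert {\mathcal B\mathcal P}_K\vert,$$
so choosing $r\geq 2$ ramified primes already gives $\vert {\mathcal B\mathcal P}_L^G\vert\geq p>1$ irrespective of ${\mathcal B\mathcal P}_K$. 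The step I expect to require the most care is this final coordination: verifying that the logarithmic congruence cutting out $D=1$ and the congruence $\No({\mathfrak q})\equiv 1\pmod p$ jointly define a positive-density set of primes, and that the extension $L/K$ built from such primes genuinely ramifies with $e^{}_{\mathfrak q}=p$ at each chosen ${\mathfrak q}$ rather than splitting there; everything else is a direct consequence of Theorem~\ref{ptsfixes} and Lemma~\ref{zeta}.
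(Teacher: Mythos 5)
The paper states this corollary without any proof (it implicitly leans on the theory of primitive ramification in [Gr1], IV.3), so there is no argument of the author's to compare against; your route --- read the equality off Theorem \ref{ptsfixes} as the condition $D=1$ on the index in the denominator, then manufacture infinitely many $L$ realizing $D=1$ with enough tame ramification to force non-triviality --- is certainly the intended one. Your reductions are correct: $D=1$ follows once ${\rm Log}_K({\mathfrak q})\in p\,\Z_p{\rm Log}_K(I_K)$ for each ramified tame ${\mathfrak q}$ (equivalently, ${\mathfrak q}$ splits completely in the compositum of the first layers of the $\Z_p$-extensions of $K$, via the exact sequence $1\to{\mathcal T}_K\to{\mathcal A}_K\to\Z_p{\rm Log}_K(I_K)\to 0$ of \S\,\ref{rho}), this is a Chebotarev condition of positive density compatible with $\No({\mathfrak q})\equiv 1\pmod p$, and the bound $\vert{\rm Im}(\psi)\vert\le p$ from $(2')$ and Lemme \ref{zeta} gives $\vert{\mathcal B\mathcal P}_L^G\vert\ge p^{\,r-1}$. (In fact, since your $L/K$ is tamely ramified somewhere it is not $p$-ramifi\'ee, so by the Proposition of \S\,\ref{cycl} one even has $\vert{\rm Im}(\psi)\vert=1$ and $r=1$ would already suffice.)

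The one genuine gap is the step you yourself flag: the assertion that ``the existence theorem of class field theory yields a cyclic degree-$p$ extension $L/K$ ramified exactly at these primes.'' The conditions $\No({\mathfrak q}_i)\equiv 1\pmod p$ do not guarantee this. The $p$-ray class group modulo ${\mathfrak m}=\prd_i{\mathfrak q}_i$ sits in an exact sequence whose left-hand term is $\big(\bigoplus_i({\mathcal O}_K/{\mathfrak q}_i)^\times\otimes\Z_p\big)$ modulo the image of the global units $E_K$, and this image can kill the local factor at ${\mathfrak q}_i$ entirely (already for $K$ real quadratic with a fundamental unit generating $({\mathcal O}_K/{\mathfrak q})^\times\otimes\F_p$ there is no cyclic degree-$p$ extension ramified exactly at ${\mathfrak q}$). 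To make each inertia subgroup survive into an order-$p$ quotient you must impose further splitting conditions on the ${\mathfrak q}_i$ --- typically total splitting in $K(\mu(K)_p,\sqrt[p]{E_K})$ and in the $p$-Hilbert class field, so that units and classes become local $p$-th powers at the ${\mathfrak q}_i$ --- and then choose a character of the ray class group non-trivial on each of the $r$ resulting independent lines (possible for $r\le p$, in particular for $r=1$ or $2$). These are again Chebotarev conditions on a fixed finite extension, compatible with the ones you already impose, so the gap is repairable; but as written the construction of $L$ can fail for the primes your argument selects.
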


\begin{corollary} Lorsque l'extension $L/K$ est $p$-primitivement ramifi\'ee  (\cite{Gr1}, IV.3, \S\,(b), D\'efinition 3.4), 
ce qui est (sous la conjecture de Leopoldt pour $L$ et $p$) \'equivalent \`a
$\big( \sum_{{\mathfrak q}\,\notdiv\, p}\,\hbox{$\frac{1}{e^{}_{\mathfrak q}}$}  
\Z_p {\rm Log}_{K}({\mathfrak q}) + \Z_p {\rm Log}_{K}(I_{K}) : \Z_p {\rm Log}_{K}(I_{K}) \big) =  
\prod_{{\mathfrak q}\,\notdiv\, p} e^{}_{\mathfrak q}$,
on a $\vert {\mathcal B\mathcal P}_L^G \vert \geq \vert {\mathcal B\mathcal P}_K \vert$ si et seulement si 
$\vert {\rm Im}(\theta)\vert \geq \vert {\rm Im}(\psi)\vert$.
\end{corollary}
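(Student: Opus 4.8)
The plan is to derive the corollary as an immediate specialization of the point-fixed formula of Theorem~\ref{ptsfixes}, since the $p$-primitive ramification hypothesis is precisely what forces the arithmetic prefactor of that formula to equal~$1$. First I would recall the identity supplied by Theorem~\ref{ptsfixes},
$$\vert {\mathcal B\mathcal P}_L^G \vert =\frac{ \prod_{{\mathfrak q}\,\notdiv\, p} e^{}_{\mathfrak q}}{\big( \sum_{{\mathfrak q}\,\notdiv\, p} \frac{1}{e^{}_{\mathfrak q}} \Z_p {\rm Log}_{K}({\mathfrak q}) + \Z_p {\rm Log}_{K}(I_{K}) : \Z_p {\rm Log}_{K}(I_{K}) \big)} \cdot \vert {\mathcal B\mathcal P}_K \vert \cdot \frac{\vert {\rm Im}(\theta)\vert }{\vert {\rm Im}(\psi)\vert},$$
in which, as stated there, the leading quotient of the product of tame ramification indices by the displayed module index is a positive integer.

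Next I would invoke the equivalence recorded in the statement of the corollary itself: under the Leopoldt conjecture for $L$ and $p$, the extension $L/K$ is $p$-primitively ramified (in the sense of \cite{Gr1}, IV.3, \S\,(b), D\'efinition 3.4) exactly when that module index equals $\prod_{{\mathfrak q}\,\notdiv\, p} e^{}_{\mathfrak q}$. Substituting this equality into the formula above makes the leading prefactor equal to $1$, so that under the $p$-primitive ramification hypothesis the formula collapses to
$$\vert {\mathcal B\mathcal P}_L^G \vert = \vert {\mathcal B\mathcal P}_K \vert \cdot \frac{\vert {\rm Im}(\theta)\vert }{\vert {\rm Im}(\psi)\vert}.$$

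Finally, since $\vert {\mathcal B\mathcal P}_K \vert$ is a fixed finite positive integer and the images of $\theta$ and $\psi$ are finite (the ambient $\mathrm{H}^1(G,\cdot)$ being finite for the finite $\Z_p$-modules $W_L/\mu(L)$ and $\mu(L)$ appearing in the exact sequences $(1)$ and $(2)$ under Leopoldt), I would divide the last display by $\vert {\mathcal B\mathcal P}_K \vert$ to conclude that $\vert {\mathcal B\mathcal P}_L^G \vert \geq \vert {\mathcal B\mathcal P}_K \vert$ holds if and only if $\vert {\rm Im}(\theta)\vert / \vert {\rm Im}(\psi)\vert \geq 1$, i.e. if and only if $\vert {\rm Im}(\theta)\vert \geq \vert {\rm Im}(\psi)\vert$, which is the assertion. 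I do not anticipate any genuine obstacle here: all the substantive work is already packaged into Theorem~\ref{ptsfixes}, and the corollary is merely the case in which the ramification-index prefactor is trivial; the only point deserving a word of care is the well-definedness (finiteness) of the ratio $\vert {\rm Im}(\theta)\vert / \vert {\rm Im}(\psi)\vert$, which is what legitimizes rephrasing the numerical inequality as a plain comparison of the two image orders.
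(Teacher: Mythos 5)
Your proposal is correct and follows exactly the route the paper intends: the corollary is an immediate specialization of Theorem~\ref{ptsfixes}, obtained by noting that the $p$-primitive ramification hypothesis makes the leading integer prefactor equal to $1$, whence $\vert {\mathcal B\mathcal P}_L^G \vert = \vert {\mathcal B\mathcal P}_K \vert \cdot \vert {\rm Im}(\theta)\vert / \vert {\rm Im}(\psi)\vert$ and the stated equivalence follows by comparing with $\vert {\mathcal B\mathcal P}_K \vert$. The paper gives no separate proof, treating the corollary as immediate, and your argument supplies precisely the omitted verification.
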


\subsection{Caract\'erisation du cas $\vert {\rm Im}(\psi)\vert = p$ pour $L/K$ cyclique de degr\'e $p$} \label{cycl}
 Pour un cadre plus g\'en\'eral voir \cite{Ng2}, \S\,3 et \cite{Ja6}.
On \'ecarte le cas trivial ${\rm H}^1(G, \mu(L))=0$ o\`u $\vert{\rm Im}(\psi)\vert=1$ et
$\vert {\mathcal B\mathcal P}_L^G \vert = \vert {\mathcal B\mathcal P}_K \vert \,.\, \vert {\rm Im}(\theta)\vert$.
On suppose donc que $\mu(K) = \mu(L) \ne 1$ (i.e., ${\rm H}^1(G, \mu(L)) = \langle \zeta_1 \rangle$).

\smallskip
On a $\vert {\rm Im}(\psi)\vert = p$ si et seulement si $\nu = 0$ dans ($2'$)~; 
or l'image de $\zeta_1$ par $\nu$ est la classe de l'image diagonale de $\zeta_1 \pmod {W_L^{1-s} }$. 
Cette classe est alors nulle si et seulement si $\zeta_1 \in W_{L,v}^{1-s}$ pour tout $v \div p$ donc
 si et seulement si $j_{L/K}$ est {\it non injectif}  (Lemme \ref{kummer}).
D'apr\`es le Th\'eor\`eme \ref{thmf} et puisque $\mu(L)=\mu(K) \ne 1$, 
$j_{L/K}$ est non injectif si et seulement $L/K$ est $p$-ramifi\'ee et si pour toute place $v \div p$, 
non d\'ecompos\'ee dans $L$, on a $\mu(K_v) = \mu(L_w)^p$.

\smallskip
En r\'esum\'e, on a obtenu la condition n\'ecessaire et suffisante suivante~:

\begin{proposition} Soit $L/K$ cyclique de degr\'e $p$ v\'erifiant la conjecture de Leopoldt pour~$p$.
On a $\vert {\rm Im}(\psi)\vert = p$ si et seulement si $\mu(K) = \mu(L) \ne 1$,
$L/K$ est $p$-ramifi\'ee, et si pour toute place $v \div p$, non d\'ecompos\'ee dans $L$, on a $\mu(K_v) = \mu(L_w)^p$.
\end{proposition}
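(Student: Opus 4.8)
The plan is to read $|{\rm Im}(\psi)|$ directly off the exact sequence $(2')$ and then translate the vanishing of $\nu$ into the stated local condition at $p$. First I would pin down the target of $\psi$: by Lemma \ref{zeta} the group ${\rm H}^1(G,\mu(L)) = {}_N^{}\mu(L)/\mu(L)^{1-s}$ is trivial unless $\mu(K)=\mu(L)\ne 1$, in which case it is cyclic of order $p$, generated by the class of $\zeta_1$. Since $\psi$ has values in this group, $|{\rm Im}(\psi)|=p$ already forces the first condition $\mu(K)=\mu(L)\ne 1$; and under that hypothesis ${\rm Im}(\psi)$ is a subgroup of a group of order $p$, so exactness of $(2')$ (which gives ${\rm Im}(\psi)={\rm Ker}(\nu)$) yields the clean dichotomy: $|{\rm Im}(\psi)|=p$ if and only if $\nu=0$.

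Next I would compute $\nu$ on the generator. The map $\nu$ is induced by the diagonal inclusion $\mu(L)\hookrightarrow W_L=\bigoplus_{v\div p} W_{L,v}$, so it sends the class of $\zeta_1$ to the family $\big(i_v(\zeta_1)\big)_{v\div p}$ read modulo $W_{L,v}^{1-s}$ inside $\bigoplus_{v\div p}{}_N^{}W_{L,v}/W_{L,v}^{1-s}$. Hence $\nu=0$ precisely when $i_v(\zeta_1)\in W_{L,v}^{1-s}$ for every $v\div p$, i.e. when $i_L(\zeta_1)\in W_L^{1-s}$. The substantive step is then the place-by-place cohomological computation of $W_{L,v}^{1-s}$, which I would carry out exactly as in the proof of Theorem \ref{cns2}: for $v$ decomposed, ${\rm H}^1(G,W_{L,v})=0$ (Shapiro) forces $i_v(\zeta_1)\in W_{L,v}^{1-s}$; for $v$ non-decomposed with $\mu(L_w)^p=\mu(K_v)$ one finds $W_{L,v}^{1-s}=\langle i_v(\zeta_1)\rangle$, again containing $i_v(\zeta_1)$; and for $v$ non-decomposed with $\mu(L_w)=\mu(K_v)$ one gets $W_{L,v}^{1-s}=1$, excluding $i_v(\zeta_1)$. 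Collating the three cases shows that $i_L(\zeta_1)\in W_L^{1-s}$ holds exactly when no place falls in the last case, that is, when every non-decomposed $v\div p$ satisfies $\mu(K_v)=\mu(L_w)^p$.

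Finally I would recognise this local condition as the non-injectivity criterion for the capitulation map, thereby importing the remaining hypotheses of the statement from the earlier theory. By Lemma \ref{kummer} the kernel ${\rm Ker}(j_{L/K})$ is nontrivial exactly when $W_L^{1-s}\cap i_L(\mu(K))\ne 1$, i.e. when $i_L(\zeta_1)\in W_L^{1-s}$; and Theorem \ref{thmf}, applied under $\mu(L)=\mu(K)\ne 1$, packages this non-injectivity as ``$L/K$ is $p$-ramified together with $\mu(K_v)=\mu(L_w)^p$ at each non-decomposed $v\div p$''. Chaining $|{\rm Im}(\psi)|=p \Leftrightarrow \nu=0 \Leftrightarrow i_L(\zeta_1)\in W_L^{1-s} \Leftrightarrow j_{L/K}$ non-injective then delivers the Proposition. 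I expect the middle paragraph to be the main obstacle: everything downstream hinges on the exact identification of $\nu(\zeta_1)$ as the diagonal class and on the in/out dichotomy for $i_v(\zeta_1)$ relative to $W_{L,v}^{1-s}$ supplied by the three local computations, whereas the initial reduction and the final identification with $j_{L/K}$ are formal once Lemma \ref{zeta}, Lemma \ref{kummer} and Theorem \ref{thmf} are invoked.
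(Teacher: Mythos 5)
Your proposal follows the paper's own proof essentially step for step: reduce $\vert {\rm Im}(\psi)\vert = p$ to $\nu=0$ via exactness of $(2')$ after discarding the case ${\rm H}^1(G,\mu(L))=0$, identify $\nu(\zeta_1)$ with the class of the diagonal image of $\zeta_1$ modulo $W_L^{1-s}$, translate its vanishing into the condition $i_v(\zeta_1)\in W_{L,v}^{1-s}$ at every $v\div p$, and then convert this into the stated hypotheses through the non-injectivity criterion of Lemme \ref{kummer} and Th\'eor\`eme \ref{thmf}. The only cosmetic difference is that you re-derive the three local cases explicitly (as in the proof of Th\'eor\`eme \ref{cns2}) where the paper simply cites Lemme \ref{kummer}; the substance and the chain of equivalences are identical.
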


\begin{corollary} On suppose que l'extension $L/K$ est $p$-primitivement ramifi\'ee. 
Alors si $L/K$ est ramifi\'ee en au moins une place mod\'er\'ee ou s'il existe $v_0 \div p$ non 
d\'ecom\-pos\'ee telle que $\mu(K_{v_0}) = \mu(L_{w_0})$, alors on a $\vert {\rm Im}(\psi) \vert = 1$ et
$\vert {\mathcal B\mathcal P}_L^G \vert = \vert {\mathcal B\mathcal P}_K \vert \, \vert {\rm Im}(\theta) \vert$.
C'est aussi trivialement le cas si $L/K$ est cyclotomique globale.
\end{corollary}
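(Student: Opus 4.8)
The plan is to read off the corollary from the point-fixes formula of Theorem~\ref{ptsfixes}, once one knows that $\vert {\rm Im}(\psi)\vert = 1$. First I would use the hypothesis that $L/K$ is $p$-primitively ramified: by the preceding corollary this means precisely that the index $\big( \sum_{{\mathfrak q}\,\notdiv\, p}\,\frac{1}{e_{\mathfrak q}}\,\Z_p {\rm Log}_{K}({\mathfrak q}) + \Z_p {\rm Log}_{K}(I_{K}) : \Z_p {\rm Log}_{K}(I_{K}) \big)$ equals $\prod_{{\mathfrak q}\,\notdiv\, p} e_{\mathfrak q}$. Substituting this into the formula of Theorem~\ref{ptsfixes}, the first (integer) factor collapses to $1$ and we are left with $\vert {\mathcal B\mathcal P}_L^G \vert = \vert {\mathcal B\mathcal P}_K \vert \cdot \vert {\rm Im}(\theta)\vert / \vert {\rm Im}(\psi)\vert$. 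Hence everything reduces to showing that each of the listed hypotheses forces $\vert {\rm Im}(\psi)\vert = 1$.

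Next I would recall that $\psi$ maps into ${\rm H}^1(G,\mu(L))$, which by Lemma~\ref{zeta} has order $1$ or $p$; thus $\vert {\rm Im}(\psi)\vert \in \{1,p\}$ and proving $\vert {\rm Im}(\psi)\vert = 1$ is the same as excluding $\vert {\rm Im}(\psi)\vert = p$. By the Proposition immediately preceding the statement, $\vert {\rm Im}(\psi)\vert = p$ holds if and only if the three conditions \emph{(a)} $\mu(K)=\mu(L)\ne 1$, \emph{(b)} $L/K$ is $p$-ramified, and \emph{(c)} $\mu(K_v)=\mu(L_w)^p$ for every $v\div p$ non-split in $L$, all hold simultaneously. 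So for each hypothesis of the corollary it suffices to exhibit one of \emph{(a)}, \emph{(b)}, \emph{(c)} that fails.

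I would then dispose of the three cases in turn. If $L/K$ is ramified at some tame place ${\mathfrak q}\,\notdiv\, p$, then $L/K$ is not $p$-ramified, so \emph{(b)} fails. If there is a non-split $v_0\div p$ with $\mu(K_{v_0})=\mu(L_{w_0})$, I would split on whether $\mu(L)=1$: when $\mu(L)=1$ condition \emph{(a)} fails outright; when $\mu(L)\ne 1$ the global $p$-th roots of unity embed into $L_{w_0}$, so $\mu(L_{w_0})\ne 1$ and the equality $\mu(K_{v_0})=\mu(L_{w_0})$ is strictly stronger than $\mu(K_{v_0})=\mu(L_{w_0})^p$, whence \emph{(c)} fails at $v_0$. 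Finally, if $L/K$ is globally cyclotomic then $\mu(K)=\mu(L)^p\ne\mu(L)$, so \emph{(a)} fails---equivalently ${\rm H}^1(G,\mu(L))=0$ by Lemma~\ref{zeta}(ii), making $\vert {\rm Im}(\psi)\vert=1$ trivially.

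In every case $\vert {\rm Im}(\psi)\vert=1$, and the formula of the first paragraph then gives $\vert {\mathcal B\mathcal P}_L^G \vert = \vert {\mathcal B\mathcal P}_K \vert\,\vert {\rm Im}(\theta)\vert$. The proof is essentially bookkeeping against the Proposition; the one delicate point---the closest thing to an obstacle---is the non-split case, where one must not overlook the possibility $\mu(L)=1$ and must use the injectivity of $L\hookrightarrow L_{w_0}$ on roots of unity to be sure that $\mu(K_{v_0})=\mu(L_{w_0})$ genuinely violates condition \emph{(c)} rather than being compatible with it.
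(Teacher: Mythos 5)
Your proof is correct and follows exactly the route the paper intends (the corollary is stated without proof, as an immediate consequence of Theorem~\ref{ptsfixes}, the definition of $p$-primitive ramification, and the preceding Proposition characterizing $\vert {\rm Im}(\psi)\vert = p$). Your case analysis, including the check that $\mu(K_{v_0})=\mu(L_{w_0})$ with $\mu(L)\ne 1$ genuinely negates condition \emph{(c)} because $\mu(L_{w_0})^p\subsetneq\mu(L_{w_0})$, is exactly the required bookkeeping.
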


Si $\vert {\rm Im}(\psi) \vert = p$, il suffit d'avoir $\vert{\rm Im}(\theta)\vert \geq p$
 pour que l'in\'egalit\'e $\vert {\mathcal B\mathcal P}_L^G \vert \geq \vert {\mathcal B\mathcal P}_K \vert$ 
soit v\'erifi\'e~; or ${\rm Im}(\theta)$ d\'epend non trivialement
du noyau de $\rho : {\rm H}^1(G, W_L/\mu(L)) \!\too {\rm H}^1(G, {\mathcal T}_L)$.

\subsection{Calcul du noyau de $\rho  : {}_N^{}(W_L/\mu(L)) \big /(W_L/\mu(L))^{1-s} \too  
{}_N^{}{\mathcal T}_L \big /{\mathcal T}_L^{1-s}$}\label{rho}
On reprend la suite exacte (1$'$) en la prolongeant comme suit~:

\medskip\noindent
$1 \to (W_L/\mu(L))^G \too {\mathcal T}_L^G \simeq {\mathcal T}_K 
\too {\mathcal B\mathcal P}_L^G \ds \mathop{\too}^{\theta}$

\smallskip
\hfill ${}_N^{}(W_L/\mu(L)) \big /(W_L/\mu(L))^{1-s} 
\ds\mathop{\too}^{\rho} {\rm H}^1(G, {\mathcal T}_L) =
{}_N^{}{\mathcal T}_L \big /{\mathcal T}_L^{1-s}.$

\medskip
 On est donc dans le cadre suivant pour $L/K$ cyclique $p$-ramifi\'ee (Th\'eor\`eme \ref{thmf})~:

\smallskip
(i) $\mu(K) = \mu(L) \ne 1$ ($L/K$ non globalement cyclotomique),

\smallskip
(ii) pour toute place $v \div p$ non d\'ecompos\'ee, on a 
$\mu(K_v) = \mu(L_w)^p$ ($L/K$ localement cyclotomique en $p$).

\begin{lemma} Sous les hypoth\`eses pr\'ec\'edentes, on a ${\rm H}^1(G, W_L/\mu(L)) \simeq \Z/p\Z$. Un g\'en\'e\-rateur de 
ce groupe est donn\'e par l'image d'un $\xi_{L,0}\in W_L$ tel que $\No_{L/K}(\xi_{L,0}) = \zeta_{L,0}$, o\`u $\zeta_{L,0}$ est un g\'en\'erateur de $\mu(K)$.
\end{lemma}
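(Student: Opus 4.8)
Le plan est de tirer le lemme de la suite exacte courte de $G$-modules $1 \to \mu(L) \too W_L \too W_L/\mu(L) \to 1$ et de la suite exacte longue de cohomologie de Tate qu'elle induit, en montrant que, sous les hypoth\`eses (i) et (ii), le module (fini) $W_L$ est cohomologiquement trivial. Comme $G$ est cyclique d'ordre $p$, on dispose de la p\'eriodicit\'e $\hat{\rm H}^{i+2}(G,\mb) \simeq \hat{\rm H}^i(G,\mb)$ et de l'identification ${\rm H}^1(G, M) = {}_N^{}M/M^{1-s} = \hat{\rm H}^{-1}(G, M)$~; par ailleurs, pour le calcul du g\'en\'erateur, on utilisera que $N_G = i_{L/K} \circ \No_{L/K}$ sur $W_L$, o\`u $i_{L/K} : W_K \too W_L$ est injectif.

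Je commencerais par \'etablir que ${\rm H}^1(G, W_L) = 0$. En effet ${\rm H}^1(G, W_L) = \bigoplus_{v\div p} {\rm H}^1(G, W_{L,v})$, et sous (i)--(ii) toute place $v \div p$ est d\'ecompos\'ee dans $L/K$, ou bien non d\'ecompos\'ee avec $\mu(K_v) = \mu(L_w)^p$~: on rel\`eve donc des cas (i) ou (ii) de la preuve du Th\'eor\`eme \ref{cns2} (le cas (iii), non d\'ecompos\'e avec $\mu(L_w) = \mu(K_v)$, \'etant exclu par (ii)), pour lesquels ${\rm H}^1(G, W_{L,v}) = 0$. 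Il reste \`a voir que $\hat{\rm H}^0(G, W_L) = W_K / N_G W_L = 0$, c'est-\`a-dire que $\No_{L/K} : W_L \too W_K$ est surjective~; comme $W_L$ est fini et $G$ cyclique, le quotient de Herbrand vaut $q(W_L) = 1$, d'o\`u $\vert \hat{\rm H}^0(G, W_L) \vert = \vert {\rm H}^1(G, W_L) \vert = 1$ (on peut aussi le v\'erifier place par place~: lemme de Shapiro au cas d\'ecompos\'e, et un calcul explicite montrant que $\No_{L_w/K_v}$ envoie un g\'en\'erateur de $\mu(L_w)$ sur un g\'en\'erateur de $\mu(K_v)$ au cas non d\'ecompos\'e localement cyclotomique). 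Ainsi tous les $\hat{\rm H}^i(G, W_L)$ sont nuls.

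La suite exacte longue de Tate associ\'ee \`a $1 \to \mu(L) \too W_L \too W_L/\mu(L) \to 1$ fournit alors un morphisme de liaison qui est un isomorphisme ${\rm H}^1(G, W_L/\mu(L)) \ds\mathop{\too}^{\delta} \hat{\rm H}^2(G, \mu(L)) \simeq \hat{\rm H}^0(G, \mu(L))$. Puisque $\mu(L) = \mu(K)$ (hypoth\`ese (i)), $G$ op\`ere trivialement sur $\mu(L)$, donc $\hat{\rm H}^0(G, \mu(L)) = \mu(K)/\mu(K)^p \simeq \Z/p\Z$, ce qui donne ${\rm H}^1(G, W_L/\mu(L)) \simeq \Z/p\Z$.

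Pour le g\'en\'erateur, je d\'ecrirais $\delta$ via la p\'eriodicit\'e $\hat{\rm H}^1 \simeq \hat{\rm H}^{-1}$~: il correspond au bord $\hat{\rm H}^{-1}(G, W_L/\mu(L)) \too \hat{\rm H}^0(G, \mu(L))$ qui \`a la classe de $\bar m \in {}_N^{}(W_L/\mu(L))$ associe celle de $N_G m \in \mu(L)$, pour un rel\`evement $m \in W_L$. La surjectivit\'e de $\No_{L/K}$ fournit $\xi_{L,0} \in W_L$ avec $\No_{L/K}(\xi_{L,0}) = \zeta_{L,0}$ g\'en\'erateur de $\mu(K)$~; alors $N_G \xi_{L,0} = i_{L/K}(\zeta_{L,0}) \in \mu(L)$, donc $\bar\xi_{L,0} \in {}_N^{}(W_L/\mu(L))$ et $\delta(\bar\xi_{L,0}) = [\zeta_{L,0}]$ engendre $\mu(K)/\mu(K)^p$. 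Comme $\delta$ est un isomorphisme, $\bar\xi_{L,0}$ engendre ${\rm H}^1(G, W_L/\mu(L))$, d'o\`u le lemme. Le point d\'elicat sera le calcul de norme locale au cas non d\'ecompos\'e localement cyclotomique (contourn\'e ici par le quotient de Herbrand) et l'identification correcte du bord $\delta$ par la p\'eriodicit\'e~; le reste est formel.
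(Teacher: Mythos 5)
Votre démonstration est correcte et suit essentiellement la même voie que celle de l'article~: suite exacte longue associée à $1 \to \mu(L) \to W_L \to W_L/\mu(L) \to 1$, annulation de ${\rm H}^1(G,W_L)$ place par place (Shapiro au cas décomposé, cas localement cyclotomique sinon), annulation de $\hat{\rm H}^0(G,W_L)$ par le quotient de Herbrand (l'article dit «~${\rm H}^2(G,W_L)=0$ puisque $W_L$ est fini~», ce qui est le même argument), puis identification ${\rm H}^1(G,W_L/\mu(L)) \simeq {\rm H}^2(G,\mu(L)) \simeq \Z/p\Z$ et obtention du générateur via la surjectivité de la norme sur $W_L$. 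Vous explicitez un peu plus le morphisme de liaison et la périodicité de Tate, mais il n'y a pas de différence de fond.
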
  

\begin{proof}La suite exacte $1 \to \mu(L) \too W_L \too W_L/\mu(L) \to 1$ conduit \`a~:
$${\rm H}^1(G, W_L) \too {\rm H}^1(G, W_L/\mu(L))\mathop{\too}^{\nu} {\rm H}^2(G,\mu(L))\too {\rm H}^2(G, W_L). $$

\noindent
Or ${\rm H}^1(G, W_L)= \bigoplus_{v \div p}{\rm H}^1(G, W_{L,v})=0$
car c'est vrai dans le cas d\'ecompos\'e (lemme de Shapiro) et dans 
le cas non d\'ecompos\'e car $\mu(K_v) = \mu(L_w)^p$ (preuve du Th\'eor\`eme \ref{cns2}). 
Ensuite, ${\rm H}^2(G, W_L)=0$ puisque
$W_L$ est fini. D'o\`u ${\rm H}^1(G, W_L/\mu(L)) \simeq {\rm H}^2(G,\mu(L))
\simeq \Z/p\Z$.

\smallskip
Comme ${\rm H}^2(G, W_L)=0$, l'existence de $\xi_{L,0}$ est imm\'ediate 
et son image dans le groupe ${\rm H}^1(G, W_L/\mu(L))$ est d'ordre $p$. 
\end{proof}

A $\xi_{L,0} \in W_L$ (telle que $\No_{L/K}(\xi_{L,0}) =i_K(\zeta_{K,0})$), 
$\rho$ associe l'\'el\'ement $\tau_{L,0} \in {}_N^{}{\mathcal T}_L \big /{\mathcal T}_L^{1-s}$ ainsi d\'efini~:
on a $\xi_{L,0} = i_L(y)$, $y \in {\mathcal L}_1^\times$ avec $\No_{L/K}(y) = \zeta_{K,0} \, x_\infty$, 
$x_\infty \in {\mathcal K}_\infty^\times$~; 
alors l'image de $\tau_{L,0}$ dans ${\mathcal I}_L/{\mathcal P}_{L, \infty}$ est celle de l'id\'eal $(y)$.
On aura nullit\'e de $\rho$ si et seulement si $\tau_ {L,0}=\tau'{}^{1-s}_{\!\! L,0}$ pour un $\tau'_{L,0} \in {\mathcal T}_L$.

\smallskip
Il semble que tous les cas se rencontrent~; pour cela il faut 
pouvoir \'etudier les groupes de torsion ${\mathcal T}_k$
de fa\c con effective (pour un corps de nombres $k$), ce qui est possible en utilisant le logarithme 
${\rm Log}$ introduit dans \cite{Gr1}, III.2.2,
et qui conduit \`a la suite exacte~:
$$1 \to {\mathcal T}_k \tooo {\mathcal A}_k \simeq 
{\mathcal I}_k/{\mathcal P}_{k, \infty} \mathop{\tooo}^{{\rm Log}_k}  {\rm Log}_k({\mathcal I}_{k}) = \Z_p {\rm Log}_k(I_{k}) \to 0, $$
o\`u l'on rappelle que ($ {\rm log}_k$ \'etant le logarithme 
$p$-adique d'Iwasawa usuel)~:
$${\rm Log}_k \ : \ {\mathcal I}_k/{\mathcal P}_{k, \infty}\tooo 
\big( \hbox{$\bigoplus_{v \div p}$} k_v\big )\big /\Q_p  {\rm log}_k(E_k), $$

\noindent
est d\'efini pour tout id\'eal ${\mathfrak a}$ de $k$ \'etranger \`a $p$ par~: 
$${\rm Log}_k({\mathfrak a}) := \Frac{1}{m}{\rm log}_k(a) \pmod {\Q_p  {\rm log}_k(E_k)}, $$ 
o\`u $m$ est n'importe quel entier tel que ${\mathfrak a}^m$ est un id\'eal principal $(a)$.

\medskip
On a $\rho=0$ s'il existe un id\'eal ${\mathfrak b} \in  {\mathcal I}_L$ tel que
${\rm Log}_L( {\mathfrak b}) = 0$ et tel que $(y) = {\mathfrak b}^{1-s} \,(y_\infty)$.

\smallskip
En pratique ${\rm Log}_L({\mathcal P}_L)$ est canonique 
comme image de ${\rm log}_L(U_L)$ dans le $\Q_p$-espace 
$\big( \bigoplus_{w \div p} L_w\big ) \big / \Q_p {\rm log}_L(E_L)$, et
le $\Z_p$-module ${\rm Log}_L({\mathcal I}_L)$ est connu 
num\'eriquement d\`es que le groupe des classes l'est (au moyen 
d'id\'eaux g\'en\'erateurs ${\mathfrak b}_1$, \ldots, ${\mathfrak b}_r$ modulo le groupe des 
id\'eaux principaux de $L$), \`a savoir~:
$${\rm Log}_L({\mathcal I}_L)=\big\langle {\rm Log}_L({\mathfrak b}_1), \ldots, {\rm Log}_L({\mathfrak b}_r) \big\rangle_{\Z_p} 
\hbox{\Large +}  \  {\rm Log}_L(U_L). $$

\smallskip
Tout ceci d\'epend donc largement du groupe des unit\'es de $L$.
Il para\^it clair, en raison des valeurs des 
${\rm Log}_L({\mathfrak b}_i)$ et de ${\rm Log}_L((y))$ que, 
num\'eriquement, tout type d'exemple est possible comme le montre 
l'exp\'erience de ces calculs logarithmiques d'id\'eaux pour lesquels
il suffit de calculer modulo une puissance de $p$ convenable.

\smallskip
Comme  le quotient de Herbrand de ${\mathcal T}_L$ est nul on a 
$\vert  {}_N^{}{\mathcal T}_L \big /{\mathcal T}_L^{1-s} \vert  =\Frac{\vert  {\mathcal T}_K\vert }{\vert\No_{L/K}( {\mathcal T}_L) \vert}$.
Or la norme de ${\mathcal T}_L$ dans $L/K$ correspond, par la loi de r\'eciprocit\'e d'Artin, \`a la restriction
${\mathcal T}_L \too {\mathcal T}_K$ des automorphismes, et une condition suffisante pour que $\rho$ soit nulle est que
cette norme soit surjective. D'o\`u le r\'esultat suivant (sous la conjecture de Leopoldt)~:

\begin{theorem}\label{surj} Soit $L/K$ une extension cyclique de degr\'e $p$,
kummerienne, $p$-ramifi\'ee, et non cyclotomique 
(i.e., $\mu(K)=\mu(L) \ne 1$). On suppose que pour toute place $v \div p$,
non d\'ecompos\'ee dans $L/K$, on a $\mu(L_w)^p=\mu(K_v)$ (cf. Th\'eor\`eme \ref{thmf}). 

\smallskip\noindent
Alors une condition  suffisante pour que 
$\vert {\mathcal B\mathcal P}_L^G \vert \geq \vert {\mathcal 
B\mathcal P}_K \vert$ (i.e., $\vert {\rm Im}(\theta)\vert \geq \vert {\rm Im}(\psi)\vert = p$)
est que $L \subset \widetilde K$ et que
$H_K^{\rm pr}/\widetilde K$ et $\widetilde L / \widetilde K$ soient lin\'eairement disjointes sur $\widetilde K$.
\end{theorem}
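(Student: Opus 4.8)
The plan is to start from the equality $\vert {\mathcal B\mathcal P}_L^G \vert = \vert {\mathcal B\mathcal P}_K \vert \cdot \vert {\rm Im}(\theta)\vert / \vert {\rm Im}(\psi)\vert$ furnished by the formula (3) together with the isomorphism ${\mathcal T}_L^G \simeq {\mathcal T}_K$, and to show that under the stated hypotheses the ratio on the right is at least $1$. Since the conditions imposed on $L/K$ are exactly those of the Proposition of \S\,\ref{cycl}, one has $\vert {\rm Im}(\psi)\vert = p$, and the Lemma preceding \S\,\ref{rho} gives ${\rm H}^1(G, W_L/\mu(L)) \simeq \Z/p\Z$. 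As ${\rm Im}(\theta) = {\rm Ker}(\rho)$ is a subgroup of this cyclic group of order $p$, the desired inequality $\vert {\rm Im}(\theta)\vert \geq \vert {\rm Im}(\psi)\vert = p$ is equivalent to $\rho = 0$. Hence everything reduces to proving that $\rho$ vanishes.

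First I would reduce the vanishing of $\rho$ to the vanishing of its target ${\rm H}^1(G, {\mathcal T}_L) = {}_N{\mathcal T}_L / {\mathcal T}_L^{1-s}$. Since the Herbrand quotient of the finite module ${\mathcal T}_L$ is trivial, one has $\vert {}_N{\mathcal T}_L / {\mathcal T}_L^{1-s}\vert = \vert {\mathcal T}_K\vert / \vert \No_{L/K}({\mathcal T}_L)\vert$, so that ${\rm H}^1(G, {\mathcal T}_L) = 0$ exactly when the arithmetic norm $\No_{L/K} : {\mathcal T}_L \too {\mathcal T}_K$ is surjective; surjectivity of the norm therefore suffices to force $\rho = 0$.

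Next I would translate surjectivity of the norm into the linear disjointness hypothesis. By the Artin reciprocity compatibility ${\rm res}\circ {\rm Art}_L = {\rm Art}_K \circ \No_{L/K}$, the norm ${\mathcal T}_L \too {\mathcal T}_K$ corresponds, under the identifications ${\mathcal T}_k = {\rm Gal}(H_k^{\rm pr}/\widetilde k)$, to the restriction of automorphisms ${\rm Gal}(H_L^{\rm pr}/\widetilde L) \too {\rm Gal}(H_K^{\rm pr}/\widetilde K)$. This is where the hypothesis $L \subset \widetilde K$ enters: it forces $\widetilde K \subseteq \widetilde L$ (a finite index subgroup of ${\rm Gal}(\widetilde K/K) \simeq \Z_p^{\,r}$ is again $\Z_p$-free, so $\widetilde K/L$ is a multiple $\Z_p$-extension), which is precisely what makes this restriction well defined. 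Since $H_K^{\rm pr}\widetilde L \subseteq H_L^{\rm pr}$ and $H_L^{\rm pr}/L$ is Abelian, every automorphism of $H_K^{\rm pr}\widetilde L$ over $\widetilde L$ extends to $H_L^{\rm pr}$, so the image of the restriction is ${\rm Gal}(H_K^{\rm pr}/H_K^{\rm pr}\cap \widetilde L)$. Thus the norm surjects onto ${\mathcal T}_K = {\rm Gal}(H_K^{\rm pr}/\widetilde K)$ if and only if $H_K^{\rm pr}\cap \widetilde L = \widetilde K$, which for the Galois extensions $H_K^{\rm pr}/\widetilde K$ and $\widetilde L/\widetilde K$ is exactly their linear disjointness over $\widetilde K$. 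This yields $\rho = 0$, hence $\vert {\mathcal B\mathcal P}_L^G\vert = \vert {\mathcal B\mathcal P}_K\vert$, giving the claimed inequality.

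I expect the only delicate step to be the class-field-theoretic bookkeeping in the last paragraph: the functorial identification of $\No_{L/K}$ with the restriction map, the inclusion $H_K^{\rm pr}\widetilde L \subseteq H_L^{\rm pr}$ together with the use of $L \subset \widetilde K$ to guarantee $\widetilde K \subseteq \widetilde L$, and the computation of the image of the restriction as ${\rm Gal}(H_K^{\rm pr}/H_K^{\rm pr}\cap \widetilde L)$. Everything else is formal once $\rho = 0$ is secured.
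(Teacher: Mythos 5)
Your argument is correct and follows essentially the same route as the paper: reduce to $\rho=0$ via the exact sequences (1$'$), (2$'$) and the Proposition of \S\,\ref{cycl}, kill ${\rm H}^1(G,{\mathcal T}_L)$ by the trivial Herbrand quotient once the norm ${\mathcal T}_L\to{\mathcal T}_K$ is surjective, and identify that norm with the restriction of automorphisms so that surjectivity amounts to $H_K^{\rm pr}\cap\widetilde L=\widetilde K$. The only difference is that you spell out the final class-field-theoretic step (the inclusion $H_K^{\rm pr}\widetilde L\subseteq H_L^{\rm pr}$ and the computation of the image of the restriction) which the paper leaves implicit.
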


La seconde condition peut se v\'erifier comme suit~: soit $F$ une extension de $K$ telle que 
$\widetilde K\, F=H_K^{\rm pr}$~; alors $FL/L$ et $\widetilde L / L$ doivent \^etre lin\'eairement 
disjointes sur $L$.

\section{Conclusion}
Le cas trivial o\`u $\mu(K)=1$ (Th\'eor\`eme \ref{trivial}) permet de construire les $p$-tours  d\'efinies par 
Seo dans \cite{Seo6}, lesquelles sont infinies dans le cas o\`u ${\mathcal B\mathcal P}_K \ne 1$
(sous la conjecture de Leopoldt pour $p$ et toute $p$-extension de $K$), mais dans les autres cas se pose le 
probl\`eme de la propagation des hypoth\`eses faites au premier \'etage.

\smallskip
Le Th\'eor\`eme \ref{surj} montre des liens avec le plongement
kummerien dans les $\Z_p$-extensions (cf. \S\,\ref{histoire}\,(i)),
dont on connait l'effectivit\'e num\'erique.

\smallskip
Lorsque le $p$-groupe des classes de $K$ est trivial (ou bien lorsque $H_K \subset \widetilde K$) on a~:
$${\mathcal B\mathcal P}_K \simeq {\rm tor}_{\Z_p} \big ({\rm log}_K(U_K) / \Z_p\,{\rm log}_K (E_K)\big ), $$ 
qui d\'epend simplement des propri\'et\'es congruentielles du groupe 
des unit\'es de $K$ et grosso modo du r\'egulateur $p$-adique normalis\'e de $K$ (\cite{Gr7}, D\'efinition 2.3)~;
dans cet article nous avons conjectur\'e, pour tout $p$ assez grand, 
la $p$-rationalit\'e d'un corps de nombres $K$ (voir les propri\'et\'es de cette notion dans 
\cite{Gr1}, \cite{Gr5}, \cite{GrJ},\cite{Ja4}, \cite{JaNg}, \cite{Mo}, \cite{MoNg1})~; ceci entra\^inerait la nullit\'e de 
${\mathcal B\mathcal P}_K$ pour tout $p$ assez grand.

\end{document}